\newtheorem{thm}{Theorem}[section]
\newtheorem{defn}[thm]{Definition}
\newtheorem{rk}[thm]{Remark}
\newtheorem{prop}[thm]{Proposition}
\newtheorem{lem}[thm]{Lemma}
\newtheorem{subl}[thm]{Sublemma}
\newtheorem{cor}[thm]{Corollary}
\newcommand{\M}{\mathcal{M}}
\newcommand{\N}{\mathbb{N}}
\newcommand{\R}{\mathbb{R}}
\renewcommand{\SS}{\mathbb{S}}
\renewcommand{\=}{\asymp}
\newcommand{\BF}{\bar{\mathbf{F}}}
\newcommand{\bF}{\mathbf{F}}
\newcommand{\tbF}{\widetilde{\mathbf{F}}}
\newcommand{\tH}{\widetilde{H}}
\newcommand{\BH}{\bar{H}}
\newcommand{\tK}{\widetilde{K}}
\newcommand{\BM}{\bar{M}}
\newcommand{\tM}{\widetilde{M}}
\newcommand{\hM}{\widehat{M}}
\newcommand{\BMs}{\bar{M}\mbox{}^*}
\newcommand{\BT}{\bar{T}}
\newcommand{\hT}{\widehat{T}}
\newcommand{\tn}{\widetilde{\mathbf{\nu}}}
\newcommand{\BW}{\bar{\Omega}}
\newcommand{\hS}{\widehat{\Sigma}}
\renewcommand{\a}{\alpha}
\renewcommand{\d}{\delta}
\newcommand{\e}{\varepsilon}
\newcommand{\f}{\varphi}
\newcommand{\g}{\gamma}
\renewcommand{\l}{\lambda}
\newcommand{\m}{\mu}
\newcommand{\n}{\mathbf{\nu}}
\renewcommand{\t}{\tau}
\newcommand{\W}{\Omega}
\newcommand{\dtau}{\: d \tau}
\newcommand{\bd}{\partial}
\newcommand{\DD}{\mathbb{D}}
\newcommand{\tDD}{\widetilde{\mathbb{D}}}
\newcommand{\none}{\varnothing}
\newcommand{\wo}{\setminus}
\newcommand{\sub}{\subset}
\renewcommand{\bar}{\overline}
\newcommand{\bsm}{\left( \begin{smallmatrix}}
\newcommand{\esm}{\end{smallmatrix} \right)}
\newcommand{\bsd}{\left| \begin{smallmatrix}}
\newcommand{\esd}{\end{smallmatrix} \right|}
\newcommand{\lp}{\left(}
\newcommand{\rp}{\right)}
\newcommand{\dsp}{\displaystyle}
\newcommand{\be}{\begin{enumerate}}
\newcommand{\ee}{\end{enumerate}}
\newcommand{\ba}{\begin{align*}}
\newcommand{\ea}{\end{align*}}
\newcommand{\bi}{\begin{itemize}}
\newcommand{\ei}{\end{itemize}}
\newcommand{\rb}{%
	\begin{tikzpicture}[scale=.07]%
	\newcommand\lv{2.2}%
	\newcommand\yv{.3}%
	\draw (-\lv,\yv) -- (0,\yv);%
	\draw (-\lv,-\yv) -- (0,-\yv);%
	\draw[fill=white] (0,0) circle (1);%
	\draw[white,fill=white] (-\lv,-\yv+.1) rectangle (0,\yv-.1);%
\end{tikzpicture}%
}
\newcommand{\lb}{\reflectbox{\rb}}
\title{\vspace{-1 in}Blow-up Continuity for Type-I, Mean-Convex Mean Curvature Flow}
\author{Kevin Sonnanburg}
\date{}
\begin{document}
\maketitle

\vspace{-.5 in}
\begin{abstract}
	Under mean curvature flow, a closed, embedded hypersurface $M(t)$ becomes singular in finite time.
	For certain classes of mean-convex mean curvature flows, we show the continuity of the first singular time $T$ and the limit set ``$M(T)$'', with respect to initial data. 

We employ an Angenent-like neckpinching argument to force singularities in nearby flows. 
However, since we cannot prescribe initial data, we combine Andrews' $\a$-non-collapsed condition and Colding and Minicozzi's uniqueness of tangent flows to place appropriately sized spheres in the region inside the hypersurface. 
\end{abstract}

\setcounter{section}{-1}
\section{Introduction}

 We study the solution $M(t)$ to mean curvature flow, with initial data $M(0) = M_0$,  near the first singularity at time $T$.
Let $\bF:\mathcal{M}\times [0,T) \to \R^{N+1}$ be a family of smooth embeddings $\mathbf{F}(\cdot, t) = M(t)$, where $\mathcal{M}$ is a closed $N$-dimensional manifold. 
	We say that $M=\left\{ M(t) \right\}_{t \in [0,T)}$ is a mean curvature flow if
	\begin{equation}
		\bd_t \bF = - H \n ,
	\end{equation}
where $H$ is the scalar mean curvature, $\n$ is the \emph{outward} unit normal, and $-H \nu$ is the mean curvature vector. \\
	
We show the continuity of first singular time for two classes of flows. 
As a corollary, we show continuity of the limit set at time $T$. 

\subsection{Main Results}
\label{sec:results}

\begin{thm} \thlabel{thm:T}
	Let $\BM_0 \sub \R^{N+1}$ be a smoothly embedded, closed, mean-convex hypersurface. 
	Let $\BM(t)$ be the solution to mean curvature flow with $\BM(0) = \BM_0$. 
	Assume that $\BM(t)$ shrinks to a point at time $\BT$. 

	For every $n \in \N$, let $M_{n0} \sub \R^{N+1}$ be a smoothly embedded, closed hypersurface that can be expressed as the graph of some function $f_n$ over $\BM_0$. 
Finally, say $\BT$ and $T_n$ are the first singular times for $\BM$ and $M_n$, respectively. 

If $M_{n0} \to \BM_0$ (i.e. $\|f_n\|_{C^2(\BM_0)} \to 0$), then $T_n \to \BT$. 
\end{thm}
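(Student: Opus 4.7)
The plan is to prove two inequalities separately: $\liminf_n T_n \geq \BT$ (nearby flows do not go singular too early) and $\limsup_n T_n \leq \BT$ (they must go singular not too long after $\BT$). The first is pure stability: since $\BM$ is smooth on each interval $[0, t_*]$ with $t_* < \BT$, continuous dependence on initial data forces $M_n$ to remain smooth there. The second uses the shrinking-to-a-point hypothesis to trap $M_n(t_*)$ inside a tiny ball for some $t_*$ just below $\BT$, after which the avoidance principle finishes it off quickly.

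For the lower semicontinuity $\liminf T_n \geq \BT$, fix $t_* < \BT$. Since $\BM$ is smooth on $[0, t_*]$, choose a tubular neighborhood $U$ of its space--time track inside which each $M_n$ is representable as a normal graph over $\BM(t)$ by a function $u_n : \mathcal{M} \times [0, t_*] \to \R$. In these coordinates mean curvature flow becomes a strictly parabolic quasilinear PDE whose coefficients depend smoothly on $\BM$ restricted to $[0, t_*]$; since $\|u_n(\cdot, 0)\|_{C^2} = \|f_n\|_{C^2(\BM_0)} \to 0$, standard continuous dependence produces, for $n$ large, a smooth solution $u_n$ on $[0, t_*]$ with $\|u_n(\cdot, t)\|_{C^2} \to 0$ uniformly in $t$. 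Hence $M_n$ is smooth on $[0, t_*]$ for $n$ large, so $T_n \geq t_*$; letting $t_* \nearrow \BT$ yields the bound.

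For the upper semicontinuity $\limsup T_n \leq \BT$, fix $\e > 0$. Because $\BM$ shrinks to a point at $\BT$, choose $t_* \in (\BT - \e, \BT)$ and $p \in \R^{N+1}$ such that $\BM(t_*) \subset B_r(p)$ with $r^2 < N\e / 2$. By the first step, for $n$ sufficiently large $M_n(t_*) \subset B_{2r}(p)$, and $(2r)^2 < 2N\e$. Comparing $M_n$ with the shrinking sphere starting from $\bd B_{2r}(p)$ at time $t_*$ via the avoidance principle, $M_n$ must become singular by time $t_* + (2r)^2/(2N) < \BT + \e$, so $T_n < \BT + \e$ for $n$ large. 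Sending $\e \to 0$ completes the proof.

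The main obstacle is the uniform-in-$n$ graphical description used in the first step: although short-time existence and continuous dependence are classical, extending them to the full interval $[0, t_*]$ is nontrivial because the second fundamental form of $\BM(t)$ blows up as $t \to \BT$, causing the tubular neighborhood $U$ to thin out. The standard workaround is to iterate short-time existence across sub-intervals of $[0, t_*]$, using parabolic regularity estimates that depend only on the intrinsic geometry of $\BM$ restricted to the compact subinterval $[0, t_*]$ of the smooth existence time, so that $u_n$ remains small in $C^2$ all the way across.
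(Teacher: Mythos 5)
Your proposal is correct but takes a genuinely different route from the paper's on the upper semicontinuity step. Both arguments start with the same lower-semicontinuity step (well-posedness gives $\liminf_n T_n \ge \BT$), but then they diverge. The paper exploits mean-convexity: it defines the time-shifted flow $\hM_n(t) = M_n(t+\e)$, notes that strict mean-convexity pushes $\BM(\e)$ a positive Hausdorff distance $d$ into $\BW_0$, and then uses well-posedness to conclude $\hM_{n0} = M_n(\e) \sub \BW_0$ for $n$ large. The avoidance principle traps $\hM_n(t)$ inside $\BW(t)$, and since $\BM$ shrinks to a point, $\hT_n \le \BT$, i.e., $T_n \le \BT + \e$. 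Your argument instead uses the shrinking-to-a-point hypothesis directly: pick $t_* < \BT$ so $\BM(t_*)$ fits in a tiny ball, use well-posedness to fit $M_n(t_*)$ in a slightly larger ball of radius $2r$ with $(2r)^2 < 2N\e$, and then compare with a shrinking round sphere starting from $\partial B_{2r}(p)$ at time $t_*$ to extinguish $M_n$ by $t_* + (2r)^2/(2N) < \BT + \e$. Your approach is a bit more elementary and, notably, does not invoke mean-convexity at any point: it uses only smooth dependence on initial data and the avoidance principle with a shrinking sphere, so it would prove the theorem under the sole hypothesis that $\BM$ shrinks to a point. The paper's approach buys something else: the time-shift trick for reducing to the case $M_{n0} \sub \BW_0$ is reused verbatim in the proof of the harder Theorem 1.2, where shrinking-to-a-point fails and one must instead force a neck-pinch; so the paper is setting up machinery that generalizes, while yours is the most direct path for this particular statement.
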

Convergence is smooth in the sense of compact graphs over the hypersurface (see \textbf{Closeness} in \S \ref{sec:def}).
\\

The proof of \thref{thm:T} is short and relies heavily on the inclusion monotonicity of mean-convex flow. 
The technique is useful in the proof of our main result \thref{thm:mc}, so we do the proof of \thref{thm:T} in \S \ref{sec:T}, as soon as we have established general notation and definitions. 
The proof of \thref{thm:mc} requires $N=2$, so it is not a strict generalization of \thref{thm:T}. 
\\

Why the restriction to surfaces? 
The technique used in proving \thref{thm:mc} is inspired by the neck-pinching strategy employed by Angenent in~\cite{ang} (see \S\ref{sec:main} for an overview). 
If $\BM_0$ is a surface that does not collapse to a point under the flow, we can use current theory to predict the appropriate neck structure (i.e. the portion of $\BM(t))$ near a singularity is close to a truncated cylinder, say $\SS^1 \times [-a,a]$). 
In higher dimensions, other structures are possible (i.e. the portion of $\BM(t)$ is close to a generalized cylinder that splits off a hyperplane, rather than a line) allowing for too many degrees of freedom in the motion of $\BM(t)$ and nearby flows. 

\begin{thm} \thlabel{thm:mc}
	Let $\BM_0 \sub \R^3$ be a smoothly embedded, closed, mean-convex surface.
	Let $\BM(t)$ be the solution to mean curvature flow with $\BM(0)=\BM_0$.

	For every $n \in \N$, let $M_{n0} \sub \R^3$ be a smoothly embedded, closed surface that can be expressed as the graph of some function $f_n$ over $\BM_0$. 
	Finally, say that $\BT$ and $T_n$ are the first singular times for $\BM$ and $M_n$, respectively. 

	If $M_{n0} \to \BM_0$ (i.e. $\|f_n\|_{C^2(\BM_0)} \to 0$), and $\BM$ is a type-I flow, then $T_n \to \BT$. 
\end{thm}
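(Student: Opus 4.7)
The plan is to show $T_n \to \BT$ by establishing the two one-sided bounds $\liminf T_n \ge \BT$ and $\limsup T_n \le \BT$, with the second being the substantive work.

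For the lower bound I would argue by smooth dependence on initial data: fix $\eta > 0$ and note that $\BM$ is smooth on $[0, \BT - \eta]$ with uniformly bounded second fundamental form. Standard parabolic regularity for the graphical MCF equation over $\BM(t)$, together with the hypothesis $\|f_n\|_{C^2(\BM_0)} \to 0$, yields that $M_n$ exists smoothly on $[0, \BT - \eta]$ and converges smoothly to $\BM$ there for $n$ large. Hence $T_n \ge \BT - \eta$ eventually, so $\liminf T_n \ge \BT$.

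For the upper bound, fix $\e > 0$ and aim at $T_n \le \BT + \e$ for $n$ sufficiently large. Pick a singular point $p$ of $\BM$ at time $\BT$. Since $\BM$ is type-I and mean-convex in $\R^3$, Huisken's classification of type-I singularities combined with the Colding--Minicozzi uniqueness of tangent flows yields a \emph{unique} tangent flow at $(p, \BT)$: either a shrinking $\SS^2$ or a shrinking cylinder $\SS^1 \times \R$. The spherical case can be handled by an enclosing-sphere barrier as in \thref{thm:T} applied locally, so the substantive case is the cylindrical one, i.e.\ a neckpinch at $p$.

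In the cylindrical case I would fix $\d > 0$ small and work at the time $t_0 := \BT - \d$. The uniqueness of the tangent flow implies that after parabolic rescaling by $\sqrt{\d}$ around $p$, $\BM(t_0)$ is uniformly $C^2$-close to a round cylinder of radius $\sqrt{2\d}$. Andrews' $\a$-non-collapsing condition then produces inscribed spheres in $\Omega(\BM(t_0))$ of radius comparable to $\sqrt{\d}$ sitting on each side of the emerging pinch. By the lower-bound step, $M_n(t_0)$ is $C^2$-close to $\BM(t_0)$ for $n$ large, so the same spheres lie inside $\Omega(M_n(t_0))$ and $M_n(t_0)$ is itself $C^2$-close to the shrinking cylinder near $p$. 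I would then run an Angenent-style comparison: use the inscribed shrinking spheres together with an appropriately placed outside barrier across the neck to squeeze $M_n$ and force its neck to pinch within time $\d + o(\d)$ of $t_0$. Choosing $\d$ so that $o(\d) < \e$ then gives $T_n \le \BT + \e$ for $n$ large, hence $\limsup T_n \le \BT$.

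The main obstacle is the quantitative control of this neckpinch under perturbation. We do not prescribe $M_n$'s geometry beyond $C^2$ closeness at time $0$, so inside spheres alone merely bound the neck radius of $M_n$ from below and cannot by themselves force a pinch; conversely an outside sphere large enough to enclose the neck region is a priori not easy to place without also constraining regions of $M_n$ away from $p$. The essential step is to combine the two: use non-collapsing to install shrinking spheres on the inside of each incipient component of the neck, and use the Colding--Minicozzi-controlled cylindrical structure of $\BM$ near $p$ to locate a companion barrier in the exterior, so that $M_n$'s neck is trapped between them. Making this trap effective at the right time scale, after parabolic rescaling at scale $\sqrt{\d}$, is where the Angenent comparison must be carefully adapted from the prescribed-initial-data setting to the present perturbative one.
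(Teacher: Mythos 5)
Your proposal has the right skeleton — split into $\liminf \ge \BT$ and $\limsup \le \BT$, invoke Colding--Minicozzi uniqueness of tangent flows to isolate a unique cylindrical model, and use Andrews' non-collapsing to install inscribed spheres in an Angenent-style squeeze. That is indeed the spirit of the paper's proof of \thref{prop:contain}. But three of your steps have real gaps, and you miss one whole case.

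\emph{The inscribed spheres you describe do not outlive the neck.} Andrews' condition gives a sphere of radius $\alpha/H$ at a touching point. If you place the spheres ``on each side of the emerging pinch'' at parabolic scale $\sqrt{\delta}$, then $H \sim \delta^{-1/2}$ there and the sphere radius is $\sim \alpha\sqrt\delta$, with lifespan $\sim \alpha^2\delta/4 < \delta = \BT - t_0$. So the spheres collapse \emph{before} $\BT$ and trap nothing. To get spheres with a margin, you need a point $p\in\M$ in each bulb at which $H\bigl(\bF(p,t)\bigr)$ stays bounded as $t\nearrow\BT$, so that $r$ can be fixed \emph{independently of} $t_0$ and one can then push $t_0 \to \BT$ until the sphere's lifespan exceeds $\BT - t_0$. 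Producing such a regular point in each bulb (\thref{lem:bulb}, \thref{lem:coll}, \thref{cor:reg}) is the technical heart of the paper: the type-I bound keeps each bulb from sliding into the origin, and the Lipschitz-curve structure of the singular set yields a contradiction if an entire bulb were singular. Your proposal does not address why any such point should exist.

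\emph{The exterior barrier problem is acknowledged but not resolved.} You concede an Angenent donut is hard to place without prescribing initial data, and say the plan is to ``locate a companion barrier in the exterior'' — but you don't say how, and this is exactly what fails naively. The paper sidesteps the barrier entirely. It first reduces, by an $\e$ time-shift exploiting strict mean-convexity and \thref{lem:fth}, to the case $M_{n0}\subset\BW_0$; then $M_n(t)\subset\BW(t)$ for all later times by comparison, so the cross-sectional disk $\DD(t)$ (which shrinks to the origin by construction) automatically sits outside $M_n$'s neck. With a sphere in each bulb, $M_n(t)\cap\DD(t)$ contains a nontrivial closed curve for all $t\in[t_0,\BT)$, and since $\DD(t)$ collapses, the curvature of $M_n$ must blow up by time $\BT$. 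No donut, no exterior barrier construction.

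\emph{You omit the non-simply-connected case.} If $\BM_0$ has genus, removing the neck need not produce two disjoint bulbs, so the sphere argument doesn't set up. The paper handles this separately with \thref{prop:nonsimply}: a noncontractible loop in $M_n$ threading $\DD(t)$ forms a Hopf link with $\bd\DD(t)$, the link is preserved by the flow, and the shrinking of $\DD(t)$ forces the blow-up. Any complete proof needs this branch (or a reason it can be avoided).

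Finally, a minor but necessary reduction you skip: to even have $M_n(t)\subset\BW(t)$ you must first move $M_{n0}$ strictly inside $\BW_0$, which the paper does by replacing $M_n$ with $\hM_n(t) = M_n(t+\e)$ and using that $\BM(\e)$ is a positive Hausdorff distance inside $\BM_0$; this also handles the passage from $T_n\le\BT$ (under the containment hypothesis) to $\limsup T_n\le\BT$ in general.

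In short: the architecture is right, but the two pillars — a regular point in each bulb so the sphere radius can be fixed, and the containment reduction that lets $\DD(t)$ serve as the free exterior barrier — are missing, and the non-simply-connected case is unaddressed.
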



A corollary to continuity of first singular time is continuity of the limit set. 
However, the notion of a limit set must be made precise. 
For that, we have the following lemma. 

\begin{lem}[Proposition 2.2.6 of~\cite{mant}]
	\thlabel{lem:S}
	Let $M(t) \sub \R^{N+1}$ be a compact mean curvature flow defined for times $t \in [0,T)$, where $T$ is the first singular time. 
	Define $M^*$ to be the set of points $x \in \R^{N+1}$ such that there exists a sequence of times $t_i \nearrow T$ and a sequence of points $x_i \in M(t_i)$, where $x_i  \to x$.

	Then $M^*$ is compact. 
	Furthermore, $x \in M^*$ if and only if for every $t \in [0,T)$, the closed ball of radius $\sqrt{2N(T-t)}$ and center $x$ intersects $M(t)$. 
\end{lem}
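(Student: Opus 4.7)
The plan is to reduce the characterization to the fact that $\phi(p, t) := |p - x|^2 - 2N(T - t)$, restricted to the moving hypersurface $M(t)$, satisfies the heat equation $\bd_t \phi = \Delta_{M(t)} \phi$. Once that characterization is in hand, compactness of $M^*$ drops out almost immediately.

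The first step is to verify the PDE by direct computation. From $\bd_t \bF = -H \n$ one obtains $\bd_t \phi = -2 H (\bF - x) \cdot \n + 2N$. For the spatial side, $\Delta_{M(t)} \bF = -H \n$ (Laplacian of the position vector applied coordinate-wise) together with $\sum_i |\nabla^{M(t)} \bF_i|^2 = N$ (trace of the tangential projection on $\R^{N+1}$) gives $\Delta_{M(t)} |\bF - x|^2 = 2N - 2H(\bF - x) \cdot \n$, which matches $\bd_t \phi$.

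Next I would apply the weak maximum principle on the closed manifold $\mathcal{M}$: at a minimizer $p^*(t)$ of $\phi(\cdot, t)$ the tangential gradient vanishes and the Laplacian is non-negative, so $m(t) := \min_{p \in M(t)} \phi(p, t)$ is non-decreasing in $t$. The characterization then follows. For the easy direction, given $t_i \nearrow T$ and a choice of point $x_i \in M(t_i) \cap \bar{B}(x, \sqrt{2N(T - t_i)})$, the bound $|x_i - x| \leq \sqrt{2N(T - t_i)} \to 0$ forces $x_i \to x$, so $x \in M^*$. For the converse, suppose for contradiction that some $t_0 \in [0, T)$ has $M(t_0) \cap \bar{B}(x, \sqrt{2N(T - t_0)}) = \emptyset$; compactness of $M(t_0)$ yields $m(t_0) \geq c > 0$, and monotonicity of $m$ then forces $|p - x|^2 \geq 2N(T - t) + c \geq c$ on $M(t)$ for all $t \in [t_0, T)$, contradicting $x \in M^*$.

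Finally, for compactness of $M^*$: closedness is a standard diagonal argument (a convergent sequence in $M^*$ admits a diagonal approximation by points in $M(t_i)$ with $t_i \to T$), and boundedness is the characterization applied at $t = 0$, which places every $x \in M^*$ within $\sqrt{2NT}$ of the compact set $M_0$. The only nontrivial ingredient is the heat-equation identity for $|\bF - x|^2$, which is the standard computation recalled above; everything after that is a clean maximum-principle argument, so I do not anticipate a serious obstacle.
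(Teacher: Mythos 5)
Your proof is correct; the paper itself states this result as a citation (Proposition 2.2.6 of Mantegazza's book) without reproducing a proof, and your maximum-principle argument for $\phi(p,t)=|\bF(p,t)-x|^2-2N(T-t)$ is exactly the standard route (Mantegazza packages the same computation as a comparison with a shrinking sphere of radius $\sqrt{2N(T-t)}$ centered at $x$, which is the same inequality). The heat-equation identity, the monotonicity of the spatial minimum, the two directions of the characterization, and the deduction of boundedness plus a diagonal argument for closedness are all sound.
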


Not only does this ensure the existence of a limit set $M^*$ at time $T$, but the uniform convergence allows us to show continuity of $M^*$ in the Hausdorff distance.
\\

As a nice demonstration of the utility of the continuity of first singular time, we give the following corollary. 

\begin{cor} \thlabel{cor:profile}
	Let $\BM_0$ and the sequence $\{M_{n0}\}_n$ be as in \thref{thm:T} or \thref{thm:mc}.
	Then $M^*_n \to \BMs$ in the Hausdorff metric. 
\end{cor}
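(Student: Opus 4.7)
The plan is to deduce the Hausdorff convergence $M^*_n \to \BMs$ from three ingredients: the continuity $T_n \to \BT$ supplied by \thref{thm:T} or \thref{thm:mc}; the smooth stability of mean curvature flow on every compact subinterval $[0,t] \sub [0,\BT)$ (standard parabolic stability, which upgrades the $C^2$ closeness of initial data to $C^2$ closeness of the flows); and the inclusion $d(x, M(t)) \le \sqrt{2N(T-t)}$ for every $x \in M^*$ recorded in \thref{lem:S}.

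First I would record the preliminary observation that, for any smooth compact mean curvature flow with first singular time $T$, one has $d_H(M(t), M^*) \to 0$ as $t \nearrow T$. One direction is \thref{lem:S} itself; the other, that every point of $M(t)$ is eventually close to $M^*$, follows by a compactness argument from the definition of $M^*$ as the set of subsequential limits of $x_k \in M(t_k)$ with $t_k \nearrow T$: any counterexample sequence $y_k \in M(t_k)$ with $d(y_k, M^*) > \varepsilon$ would have a subsequential limit that both belongs to $M^*$ and is bounded away from it.

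The main body of the proof is then a triangle inequality. Given $\varepsilon > 0$, I would pick $t < \BT$ close enough to $\BT$ that both $\sqrt{2N(\BT - t)} < \varepsilon/4$ and $d_H(\BM(t), \BMs) < \varepsilon/4$; then, using $T_n \to \BT$ and smooth stability on $[0,t]$, choose $n$ large enough that $t < T_n$, $\sqrt{2N(T_n - t)} < \varepsilon/4$, and $d_H(M_n(t), \BM(t)) < \varepsilon/4$. The inclusion that every $x \in M^*_n$ lies within $\varepsilon$ of $\BMs$ then follows immediately by chaining \thref{lem:S} applied to $M_n$ (producing $y \in M_n(t)$ near $x$) with the closeness of $M_n(t)$ to $\BM(t)$ (producing $z \in \BM(t)$ near $y$) and the closeness of $\BM(t)$ to $\BMs$ (producing $w \in \BMs$ near $z$).

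The reverse inclusion, that every $w \in \BMs$ lies within $\varepsilon$ of $M^*_n$, is the main obstacle: the natural chain would require $d_H(M_n(t), M^*_n)$ to be small \emph{uniformly in $n$}, whereas \thref{lem:S} controls only the opposite direction for each $M_n$. In the setting of \thref{thm:T}, where $\BM$ collapses to a point, $\BMs$ is a singleton and the reverse inclusion reduces formally to $M^*_n \neq \varnothing$ together with the previous step. In the setting of \thref{thm:mc}, the type-I hypothesis on $\BM$ gives a rate $\sup_{y \in \BM(t)} d(y, \BMs) \ls \sqrt{\BT - t}$ (by integrating $|H| \ls (\BT-t)^{-1/2}$ along trajectories up to the singular time); the $\a$-non-collapsed/inscribed-sphere mechanism from the proof of \thref{thm:mc} locates each would-be singularity of $\BM$ and forces a nearby singularity of every perturbation $M_n$, and I would package this into a uniform estimate $\sup_{y \in M_n(t)} d(y, M^*_n) \ls \sqrt{T_n - t}$ valid for all $n$ large. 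Feeding this bound into the same three-step triangle inequality then closes the reverse inclusion and completes the proof.
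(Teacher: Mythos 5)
Your triangle-inequality scheme — chaining $M_n^* \leftrightarrow M_n(t_0) \leftrightarrow \BM(t_0) \leftrightarrow \BMs$ after choosing $t_0$ close to $\BT$ and $n$ large — is exactly the paper's. Where you diverge is in how the two outer links are estimated. The paper reads \thref{lem:S} (Mantegazza's Proposition 2.2.6) as a \emph{symmetric} Hausdorff rate bound, $d_H\bigl(M(t),M^*\bigr)\le\sqrt{2N(T-t)}$, valid for any compact mean curvature flow, and applies it verbatim once to $\BM$ and once to $M_n$. This makes both outer links controlled by $C\sqrt{\BT-t_0}$ and $C\sqrt{T_n-t_0}$ respectively, and the ``reverse inclusion'' you single out as the main obstacle is handled by the very same inequality applied to $M_n$ — no type-I hypothesis on $M_n$, no non-collapsing input, and in particular nothing that requires uniformity in $n$ beyond what \thref{thm:T}/\thref{thm:mc} already gives ($|T_n-\BT|$ small). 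The entire proof is five lines once the three quantities $t_0$, $n_1$, $n_2$ are chosen.

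Your reading of \thref{lem:S} as only controlling $\sup_{x\in M^*}\dist(x,M(t))$ and not the reverse is what sends you down the detour. That detour, as written, does not close: the bound $\sup_{y\in\BM(t)}\dist(y,\BMs)\lesssim\sqrt{\BT-t}$ you obtain by integrating $|H|\lesssim(\BT-t)^{-1/2}$ (this is essentially \thref{lem:TI}) uses the type-I hypothesis, which holds only for $\BM$; you then assert a uniform version $\sup_{y\in M_n(t)}\dist(y,M_n^*)\lesssim\sqrt{T_n-t}$ ``packaged'' from the $\alpha$-non-collapsing mechanism, but that packaging is never carried out, and nothing in the hypotheses gives a uniform type-I constant for the $M_n$. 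Also, in the \thref{thm:T} setting you wave at ``$\BMs$ is a singleton, so the reverse inclusion reduces to $M_n^*\neq\varnothing$'' — that handles $\BMs\subset(M_n^*)_\varepsilon$ but you'd still owe the reader that $M_n^*$ itself collapses near the singleton, which is again the direction you didn't establish. So: re-read \thref{lem:S} as the two-sided Hausdorff statement the paper intends (this is how Proposition 2.2.6 in~\cite{mant} is used), and the whole reverse-inclusion apparatus — the type-I integration, the non-collapsing invocation, the case split — becomes unnecessary. If on reflection you believe the two-sided rate genuinely requires a separate argument, that is an objection to the paper's proof as well, and the right move is to raise it there rather than to patch it with an unestablished uniform estimate on the $M_n$.
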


The proof of blow-up-time continuity involves multiple cases, some of which are complex, so we include an outline of the argument below.

\subsection{Idea of Main Proof} \label{sec:main}

In showing continuity of blow-up time, it is a standard application of well-posedness to conclude that $\dsp \liminf_{n \to \infty} T_n \ge \BT$. 
Thus, for Theorems \ref{thm:T} and \ref{thm:mc}, it is sufficient to show that $T_n \le \BT + \e$ for large $n$. 
Assume in the following that $n$ is large.
\\

Due to mean-convexity, the flow, and nearby flows, move inward.
A hypersurface beginning inside $\BM_0$ will remain inside by a comparison principle. 
A small adjustment in the time parameter slides nearby flows inside $\BM_0$.
Thus, in the following, we can assume $M_{n0}$ is inside $\BM_0$, affording us more control over when $M_n(t)$ becomes singular. 

\paragraph{Proving \thref{thm:T}}
Once we reduce to the case where $M_{n0}$ is inside $\BM_0$, the proof is nearly trivial.
Since $\BM(t)$ shrinks to a point, there is no escape for a hypersurface inside it. 
Either $M_n(t)$ becomes singular before time $\BT$, or it shrinks to a point at time $\BT$. 
\\

That's it for \thref{thm:T}. 
The rest of the subsection describes the proof of \thref{thm:mc}. 

\paragraph{Proving \thref{thm:mc}}
If $\BM(t)$ does not shrink to a point, we show in \S\ref{sec:prelim} that $\BM(t)$ must develop cylindrical singularities. 
In \S\ref{sec:anatomy} we show cylindrical singularities correspond to a structure with a ``neck'' and two ``bulbs''. 
The surface $M_n(t)$ could slip through the neck and survive in just one bulb of $\BM(t)$, so we cannot count on $M_n(t)$ becoming singular just because $M_{n0}$ is inside $\BM_0$. 
Thus more work is required, but we can use well-posedness to force $M_n(t)$ to have a neck structure like $\BM(t)$.  

\subparagraph{Nonsimply Connected Case}
It turns out the case when $\BM_0$ is not simply connected is easier than when it is simply connected. 
Intuitively, any tube-like portion of $\BM(t)$ will enclose a tube-like portion of $M_n(t)$, and the handle structure prevents $M_n(t)$ from wriggling away. 
Practically, we choose a nontrivial loop in $M_n(t)$ and a loop around the neck to create a Hopf link preserved by the flow.
(See Figure~\ref{fig:link})

\subparagraph{Simply Connected Case}
The strategy for the simply connected case is more intuitive, but far more technical. 
Inspired by Angenent's neck-pinching strategy in~\cite{ang}, in each bulb we place a sphere to hold it open while the neck pinches. 
Because mean curvature flow is well-posed, we can choose $n$ large enough that $M_n$ also has two bulbs held open by the spheres 
(see Figure~\ref{fig:ball}).
Angenent forces the neck to pinch by shrinking a donut around it (the Angenent donut). 
Since we do not prescribe initial data it is not clear an appropriate donut exists.
However, the singular time $T$ is given, so we have no need for the donut. 
\\

\begin{figure}[h]
	\centering
\includegraphics[scale=.3]{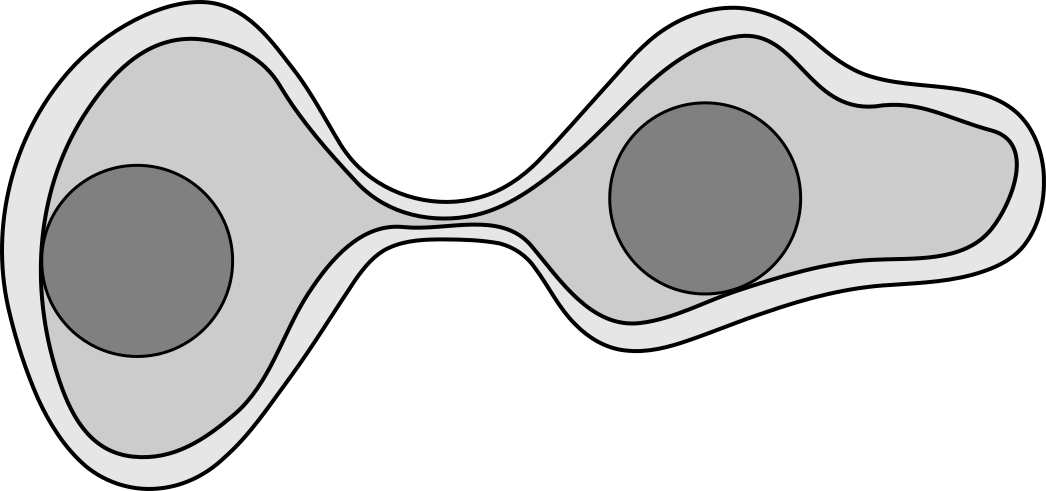}
\caption{Balls inside $\W_n$ with diameter much larger than that of the neck.}
\label{fig:ball}
\end{figure}

On the other hand, not prescribing initial data means we have no a priori knowledge of the appropriate choice of spheres. 
The spheres we choose must fit inside the bulbs \emph{and} survive past the neck pinching. 
Thus we employ a somewhat recent tool of Andrews in~\cite{and}. 
Stated roughly it says that, given $\a >0$ and some flow $M$, if at each point $x$ of $M(t)$ a sphere of radius $r = \frac{\a}{H}$ can fit inside $M(t)$ tangent at $x$, then this condition is preserved by the flow for the same $\a$. 
(See \textbf{Non-Collapsing Condition} and Figure~\ref{fig:andrews} in \S\ref{sec:def})
This allows us to place the spheres, as in the Angenent strategy. 
This is another reason we need mean-convexity. 
\\


When placing the spheres, we must choose their radius $r$, the time $t_0$ at which to place them, and $n$ large enough that we can place them inside $M_n(t_0)$. 
Initially, these quantities appear circularly dependent, so we must find conditions under which one quantity can be chosen independently of the other two. 
The choice of $n$ must depend on $t_0$ because of how we use well-posedness. 
The choice of $t_0$ must depend on $r$ because the neck must be small compared to the spheres so that it pinches before the spherse collapse. 
So we have to choose $r>0$ independent of $t_0$ and $n$. 
Since $r$ is inversely proportional to the curvature, we must show the existence of points in $\M$ for which $H\left( \bF(p,t) \right)$ stays bounded. 
It is sufficient to show there is a regular (nonsingular) point in $\BM^*$. 

\paragraph{Finding a Regular Point}
To the author's knowledge, no result exists to guarantee there is a regular point in the limit set $\BM^*$, so most of \S\ref{sec:anatomy} is dedicated to finding one. 
Under the simply connected assumption, this begins by showing the existence of the desired neck structure with bulbs. 
The type-I assumption restricts the velocity at each point to prevent a bulb from collapsing into the singular point. 
Finally, we use some properties of the singular set to show that it cannot take up a whole bulb at time $\BT$. 
Thus the limit of each bulb must have at least one regular point. 
\\

We conclude the paper by showing continuity of the limit set in \S\ref{sec:profile}, demonstrating a direct use of blow-up-time continuity. 



\section{Preliminaries} \label{sec:prelim}

Below is a list of notation collected for later reference. 

\subsection{Notation}

Throughout this paper, we recycle the use of the following letters.

\begin{itemize}
	\item $\M$: background manifold
	\item 	 $M$: family of hypersurfaces flowing by mean curvature \\
		(i.e. $M=\{M(t)\}_{t \in [0,T)}$)
	\item $\bF$: parameterization $\bF: \M \times [0,T) \to M$
		\item $\Sigma$: fixed hypersurface
	\item  $\W(t)$: open region enclosed by $M(t)$ \\ 
		(definable for closed, embedded hypersurfaces) 
	\item $\nu$: unit \emph{outward} normal vector
	\item  $T$: first singular time 
	\item $A$: second fundamental form
	\item  $H$: mean curvature 


	\item $p$: point in $\M$
	\item $x,y$: point in $\R^{N+1}$
	\item $d_H$: Hausdorff distance
\end{itemize}

The following diacritics are applied to any of the above to associate them to a specific hypersurface. 
\begin{itemize}
	\item $\bar{\square}$: associated with the base flow $\BM$ \\
		(we use $Cl(\cdot)$ for closure)
	\item $\square_0$: initial data
	\item $\square_n$: associated with $M_n$
	\item $\widetilde{\square}$: associated with the rescaled flow $\tM$ (as in \S \ref{sec:def})
	\item $\widehat{\square}$: associated with some auxiliary flow $\hM$ or hypersurface $\hS$ local to a proof
	\item $\square_{\=}$: associated with the neck of a flow \\ 
		(See \thref{defn:sets})
	\item $\square_{\rb / \lb}$: associated with a bulb of the flow \\
		(See \thref{defn:bulb})
	\item $p^* := \lim_{t \to T} \bF(p,t) \in \R^{N+1}$ \\
		(exists due to \thref{lem:TI})
	\item $M^*$: limit set consisting of all such points $p^*$ 
	\item $\square^*$: associated with $M^*$
\end{itemize}
\begin{rk}
	The definition of $M^*$ given here is equivalent to that in \thref{lem:S} due to the compactness of $\M$ and the continuity of the map $p \mapsto p^*$ by Lemma 2.6 of~\cite{stn}.
\end{rk}
The following are used in the setting of the rescaled flow. 
\begin{itemize}
	\item $\l(t) = \left( 2(T-t) \right)^{-\frac{1}{2}}$ is the scaling factor
	\item $\xi = \l x$ is the new spatial variable
	\item $s=-\frac{1}{2} \log(T-t)$ the rescaled time
	\item $s_0 = -\frac{1}{2} \log T$ is the initial time for $\tM$
\end{itemize}
We will also use $\cong$ for homeomorphicity.  \\
\subsection{Definitions} \label{sec:def}

\paragraph{Closeness}
When we say $\Sigma_n \to \Sigma$, we mean smooth convergence in the graph sense.  
That is, let $\Sigma$ and $\Sigma_n$ be closed hypersurfaces.
Assume there is a $C^{\infty}$ function $f_n:\Sigma \to \R$ so that the map $\f_n(x) = x +f_n(x) \nu$ is a smooth diffeomorphism from $\Sigma$ to $\Sigma_n$. 
Then, for every $k \in \N$, $\|f_n\|_{C^k} \xrightarrow[n \to \infty]{} 0$.

\begin{rk}
	Since $M_{n0} \cong \BM_0 \cong \M$, we can consider $\bF_n$ to have $\M$ as its background manifold. 
	This works up to time $T_n$, since the flow preserves embeddings (see \S \ref{sec:tools}). \\

If $d_H$ is the Hausdorff distance, then $\|f\|_{C^0} \le d_H$ (see \thref{lem:fth}).
This is important in the proofs of all three of our results in the previous section. 
\end{rk}

\paragraph{Singular Point}
We say $x \in \R^{N+1}$ is a singular point if there is a sequence $(p_i,t_i) \in \mathcal{M} \times [0,T)$ such that $\bF(p_i,t_i) \to x$ and $|A(p_i,t_i)| \to \infty$ as $i \to \infty$. 

		All mentions of singularities are at the first singular time $T$. 
		\paragraph{Type-I Singularities}
		We say $M$ is a type-I flow if for some $C>0$,
		\[
			\max_{M(t)} |A(p,t)| \le C \left( 2(T-t) \right)^{-\frac{1}{2}} 
			\mbox{ for } t \in [0,T),
		\]
		
		Of course, since $|H| \le \sqrt{N} |A|$, we can say the same of $H$, for a different $C$. 

		\paragraph{Cylinders}
		Since much of this work is set specifically in $\R^3$, it is convenient to distinguish between cylinders and their generalizations. 

		By \emph{generalized cylinder} we mean any set  $(\sqrt{m} \: \SS^m) \times \R^{N-m}$ with $1 \le m \le N-1$ (up to isometry). 

		By \emph{cylinder}, we mean specifically $\SS^1 \times \R$, (up to isometry).

		(Note, the radii are fixed.)
\paragraph{Rescaled Flow}
To understand the asymptotic behavior near the singularity, we consider the rescaled flow: \\
If $x \in \R^{N+1}$ is a singular point of $M$,
	\begin{align}
 \label{eqn:rescaling}
	&	\tbF_x(p,s) := \l(t) \left(\bF(p,t) - x \right), \\
	&	\mbox{where } \l(t)=(2(T-t))^{-\frac{1}{2}} \mbox{ and } s=-\frac{1}{2}\log(T-t) \nonumber ,
	\end{align}
	for $s \in [s_0,\infty)$, where $s_0 = -\frac{1}{2} \log(T)$ and we use $\xi = \l x$ as the spatial variable. 
We will refer to this rescaling as ``the rescaled flow''.

As introduced in \cite{hui}, $\tbF_x$ solves
\begin{equation} \label{eqn:rescale}
		 \bd_s \tbF_x = \tbF_x - \tH_x \tn_x,
	\end{equation}
	where $\tH_x = \tH\left( \tbF_x(p,s) \right)$ and $\tn_x = \tn\left( \tbF_x(p,s) \right)$. 
Objects associated with the rescaled flow are indicated by a tilde.
For simplicity, we will mostly be dealing with the flow rescaled around the origin, meaning $x=0$.
In that case we will \emph{omit} the subscript: $\tbF:=\tbF_0$.\\

\paragraph{Tangent Flow}

Let $M$ be a mean curvature flow for times $t \in [0,T)$.
	Fix $(x_0,t_0) \in \R^{N+1} \times \R^+$.
	Then one can check that the rescaling
	\[
		M_{\mu,(x_0,t_0)}(t) := \mu \left( M\left(t_0- \mu^{-2} (-t) \right) -x_0 \right) \mbox{  for  } t \in \left[-\mu^2 t_0, \mu^2(T-t_0) \right)
	\]
	is also a solution to mean curvature flow.
	Taking a sequence $\mu_i \nearrow \infty$, consider the sequence of rescalings $M^i_{(x_0,t_0)}(t):= M_{\mu_i,(x_0,t_0)}(t)$.
	If $M_{(x_0,t_0)}^i(t)$ has a subsequence converging in $i$ to a flow $M^{\infty}(t)$, that limit is called a \emph{tangent flow} (we discuss the manner of convergence in \S \ref{sec:TI}). 
	\\

	It will be most convenient to consider tangent flows at $(x_0,t_0)=(0,T)$, so in that case we will \emph{omit} the subscript: 
	\begin{equation}
		\label{eqn:tgt} 
		M^i = M^i_{(0,T)} 
		= \mu_i M(T+\mu_i^{-2} t) 
		\mbox{ for } t \in [-\mu_i^2 T,0) .
	\end{equation}

	\subsection{Tools}
	\label{sec:tools}
The following are previously established results, and will be taken for granted throughout this work. 
\paragraph{Well-posedness} (Theorem 1.5.1 of \cite{mant})

Given $M_0$ is compact and immersed (we require it to be embedded anyway),
well-posedness for mean curvature flow has been established in multiple contexts, but the classical case is nicely laid out in \S 1.5 of~\cite{mant}, along with the PDE background in Appendix A of the same work. \\

\begin{thm}
	\thlabel{thm:wp}
	For any initial, smooth, compact hypersurface in $\R^{N+1}$ given by an immersion $\bF_0:\M \to \R^{N+1}$, there exists a unique, smooth solution to mean curvature flow $\bF: \M \times [0,T)$ for some $T>0$, with $M_0=\bF_0(\M)$.

		Moreover, the solution depends smoothly on the initial immersion $\bF_0$.
\end{thm}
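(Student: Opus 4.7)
The plan is to convert the geometric flow into a quasilinear parabolic PDE on the fixed manifold $\M$, apply standard short-time existence theory, and then deduce smooth dependence on initial data via the implicit function theorem in Banach spaces. Since the statement is purely local in time, I do not need any of the global geometric structure used later in the paper; it is pure parabolic regularity.

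First I would express nearby solutions as normal graphs over the initial hypersurface. Since $\M$ is compact and $\bF_0$ is an immersion, the map $\f:(p,u) \mapsto \bF_0(p) + u \n_0(p)$ is a diffeomorphism from a tubular neighborhood of $\M \times \{0\}$ onto a neighborhood of $M_0$ in $\R^{N+1}$. Writing $\bF(p,t) = \bF_0(p) + u(p,t) \n_0(p)$ for small $u$ and projecting the equation $\bd_t \bF = -H\n$ onto $\n_0$ produces a quasilinear equation of the form $\bd_t u = a^{ij}(u, \c u)\, \c^2_{ij} u + b(u, \c u)$, whose principal part is a small perturbation of the Laplace--Beltrami operator on $M_0$ provided $\|u\|_{C^1}$ is small, hence uniformly parabolic. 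The tangential component of $\bd_t \bF$ corresponds to a reparameterization of $\M$ and is handled either by imposing the normal-graph gauge from the outset or via a DeTurck-type trick.

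With the equation recast as a quasilinear parabolic problem with smooth coefficients, short-time existence and uniqueness follow from a contraction-mapping argument in the parabolic H\"older space $C^{2+\a, 1+\a/2}(\M \times [0,T])$: linearize at $u=0$, solve the linear problem using Schauder theory on the compact manifold $\M$, and iterate. Parabolic smoothing plus bootstrapping gives smoothness of $u$ up to $t=0$. Uniqueness in the graph class extends to uniqueness among all smooth MCF solutions by continuity, since any smooth competitor must remain a normal graph over $M_0$ for some positive time. For smooth dependence I would apply the implicit function theorem to the solution operator $\bF_0 \mapsto \bF(\cdot, t)$ between appropriate H\"older spaces: the Fr\'echet derivative along a reference solution is a linear uniformly parabolic operator with smooth coefficients, hence an isomorphism on the relevant spaces.

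The main obstacle is the reparameterization ambiguity. The MCF equation is invariant under tangential diffeomorphisms of $\M$, so the intrinsic evolution for $\bF$ is only weakly parabolic and cannot be attacked directly by standard quasilinear parabolic theory. The cleanest workaround is DeTurck's trick (compose with a time-dependent diffeomorphism chosen to cancel the degenerate tangential directions), producing a strictly parabolic system whose solutions descend to solutions of MCF. Once this gauge issue is resolved, every subsequent step is routine parabolic PDE. As the paper quotes the statement directly from~\cite{mant}, in practice the reference suffices; the sketch above just indicates why nothing geometric beyond the PDE structure is needed.
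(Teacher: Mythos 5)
Your sketch matches the standard argument in the cited reference~\cite{mant}: the paper itself offers no proof, only the citation, and Mantegazza's treatment is exactly the graph-over-$M_0$ reduction to a quasilinear parabolic equation (with the gauge issue handled by restricting to normal graphs, which plays the same role as DeTurck's trick), short-time existence via linearization and parabolic Schauder theory on the compact manifold, and smooth dependence from the implicit function theorem on the solution operator. So you are not taking a different route; you have reconstructed the route the reference takes, and your closing remark that the reference suffices is the correct reading of how the paper uses this result.
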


The last statement is meant in the sense of \textbf{Closeness} in \S\ref{sec:def} above. 
That is, if $M_{n0}$ is the graph of a smooth function $f_{n0}$ over $\BM_0$, then there is a $\d>0$ such that for each $t \in [0,\d)$, $M_n(t)$ is a graph over $\BM(t)$. 
Furthermore, for any nonnegative integer $k$, if $\|f_{n0}\|_{C^k(\BM_0)} \to 0$, then $\|f_n(t)\|_{C^k(\BM(t))} \to 0$ for each fixed $t \in [0,\d)$. 
Since conditions at each time can be thought of as new initial data, this process can be repeated for larger and larger $n$, so that we have the same convergence at later nonsingular times and 
	\[
		\liminf_{n \to \infty} T_n \ge \BT .
	\]
	Therefore, in showing continuity of first singular time, we need only show that
	$\dsp \limsup_{n \to \infty} T_n \le \BT$. 
		\\

In particular, $k=2$ gets us uniform convergence of $H_n(t)$ to $\BH(t)$.
Precisely, this means that if $M_n(t)$ is a graph of $f_n$ over $\BM(t)$, so that for $x \in \BM(t)$ there is $y_n = x+f_n(x) \nu$, then $H_n(y_n) \rightarrow \BH(x)$. 
The convergence is uniform since $\BM(t)$ is compact. 
Due to \thref{lem:fth}, $k=0$ also gets us convergence of $M_n(t)$ to $\BM(t)$ in the Hausdorff distance. 
\\

	\paragraph{Embedding Preservation} (Theorem 2.2.7 of~\cite{mant})

If the initial hypersurface is compact and embedded, then it remains embedded during the flow.
 
In particular, for any $t_1,t_2 \in [0,T)$, $M(t_1) \cong \M \cong M(t_2)$.
	For example, a simply connected surface would stay simply connected for the duration of the flow. 

	\paragraph{Minimum Principle} (Proposition 2.4.1 of~\cite{mant})

		Mean-convexity is preserved by mean curvature flow.
		In fact, $H$ immediately becomes positive everywhere, and $\min_{M(t)}H$ is nondecreasing in $t$.
		This means that $H$ is strictly positive for $t \in (0,T)$. 
		
\paragraph{Comparison Principle} (Corollary 2.2.3 of~\cite{mant})

Similar to comparison principles for other parabolic equations, initially disjoint solutions remain disjoint. 
More specifically, let $M_1$ and $M_2$ be compact, embedded mean curvature flows, with respective first singular times $T_1$ and $T_2$. 
Assume $M_2$ begins strictly inside $M_1$. 
Then that containment is preserved until time $\min\left\{ T_1,T_2 \right\}$. 
\\

Due to the subsequent discussion in~\cite{mant} (right before Corollary 2.2.5), this can be extended to allow the hypersurfaces to touch initially. 
If they are not initially identical hypersurfaces, they will immediately be disjoint after any short amount of time. 
Then Corollary 2.2.3 of~\cite{mant} can again be applied. 

\paragraph{Non-Collapsing Condition} (Theorem 3 of~\cite{and})

From Definition 1 of~\cite{and}: We say a mean-convex hypersurface $M$ bounding an open region $\W$ in $\R^{N+1}$ is $\alpha$-non-collapsed if, for every $x \in M$, there exists a sphere of radius $\frac{\alpha}{H(x)}$ contained in $Cl(\W)$ with $x \in \partial \Omega$ (See Figure~\ref{fig:andrews}). 
We have that the condition is preserved, with the same $\a$, by mean curvature flow up to the first singular time.  
\\
\begin{figure}[h]
	\centering
\includegraphics[scale=.2]{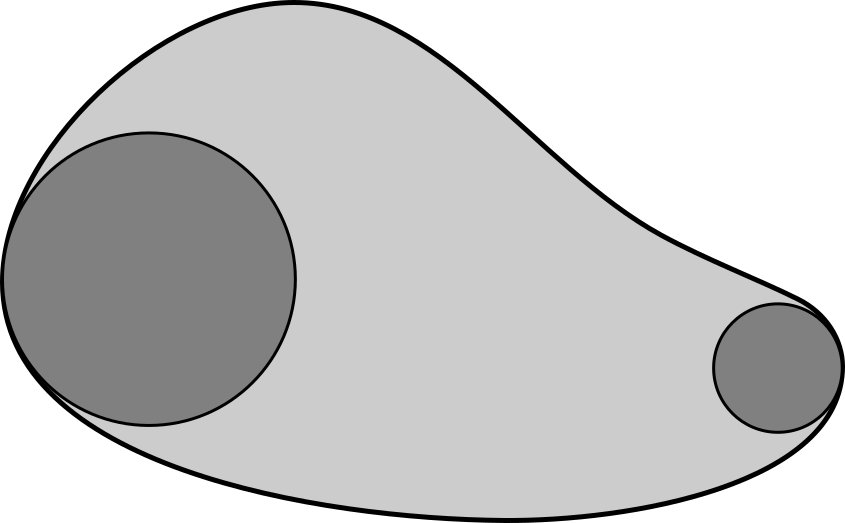}
\caption{Balls of radii varying with curvature.}
\label{fig:andrews}
\end{figure}

\paragraph{Hopf Link}
The following scenario takes place in $\R^3$ at a fixed time $t_0 \in [t_{\=},T)$, so we \emph{omit} time in the discussion below.  We assume $M$ has a cylindrical singularity (see \thref{defn:sing}) at the origin, with the $x_2$-axis as the axis of the cylinder. 
	(For more details on the notation, see \thref{lem:neck} and \thref{defn:sets}.)
\\

More than once, we use the notion of the Hopf link to ``trap'' part of a curve or surface in a pinching neck to force a singularity, as in Figure~\ref{fig:link}. 
Assume $M$ is not simply connected. 
We will have isolated a ``neck'' of $M$ as the intersection between it and a truncated, filled cylinder $K$. 
Within $K$ is $\DD = K \cap \{x_2=0\}$.
The boundary of $\DD$ is a circle, which is a simple, closed curve.  
Now take another simple, closed curve $\g$ lying in $M$ that passes through $\DD$, transversely, exactly once. 
\\

The two curves $\g$ and $\DD$ form a Hopf link, which is a nontrivial link in $\R^{N+1}$. 
To see this, rotate coordinates so the page is the $x_1x_3$-plane and the $x_2$-axis has its positive direction coming out of the page, and $\DD$ lies in the page, as depicted in Figure~\ref{fig:cross}. 
Consider only $\DD$ and $\g$, but extend $K$ to the infinite cylinder $K'=\{\sqrt{x^2+y^2} < 4 \l\}$, so that for a point in $K'$, $x_2>0$ and $x_2<0$ correspond to being ``in front of'' and ``behind'' $\DD$, respectively (see Figure~\ref{fig:cross}).
Note the inequality is strict so that $K'$ is open, unlike $K$.
That way we need not count the linking numbers below when $\g$ merely touches the boundary of $K'$.
\\

Let us compute the linking number, considering $\g$ to be going clockwise, (that is, the projection of $\g$ into the $x_1x_3$-plane has a positive winding number with respect to the origin).
As illustrated, if $\g$ leaves $K'$ while $x_2>0$ or enters $K'$ while $x_2<0$, we add 1.
Contrastly, we subtract 1 any time $\g$ enters $K'$ while $x_2>0$ or leaves $K'$ while $x_2<0$. 
Since $\g$ intersects $\DD$ at only one point, $x_2$ only changes sign once while $\g$ is in $K'$. 
This means we need only count the first time $\g$ leaves $K'$, and the last time it enters, since all other times at which it enters, it must exit on the same side of the $x_1x_3$-plane, and they cancel. 
The sum is thus two, and the linking number is one. \\

By \thref{lem:neck}, $M \cap \bd \DD = \none$, so $\g \cap \bd \DD = \none$. 
Thus neither curve can contract to a point via homotopy, if the embedding is to be preserved.
The curve $\g$ undergoes homotopy because the flow is continuous. 
The curve $\bd \DD$ shrinks homothetically, by its construction in \thref{defn:sets}, which is also a homotopy. 
Therefore, the link is preserved up to time $T$.

\begin{figure}
	\centering
	\begin{minipage}{.4 \textwidth}
		\centering
		\includegraphics[width=1 \linewidth]{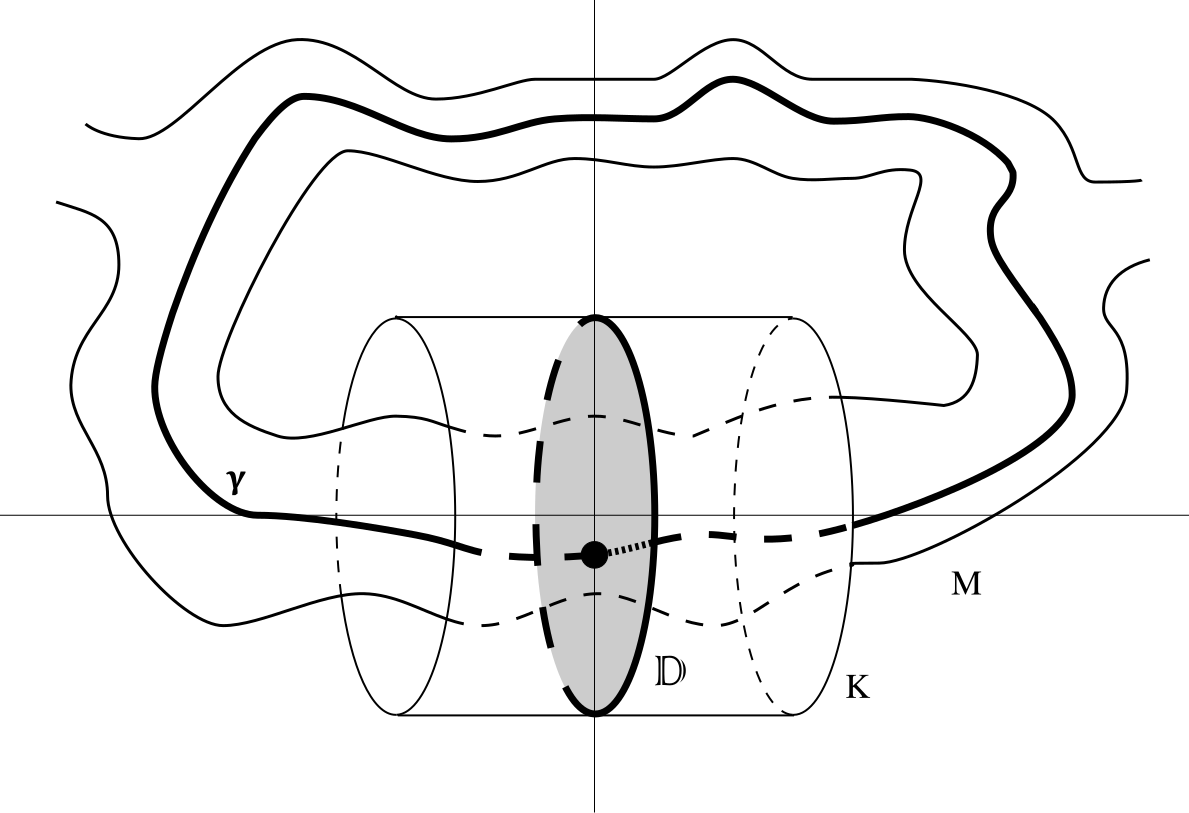}
		\subcaption{Two (thick) curves making a Hopf link}
		\label{fig:link}
	\end{minipage}
	$\longrightarrow $
	\begin{minipage}{.4 \textwidth}
		\centering
		\includegraphics[width=.6 \linewidth]{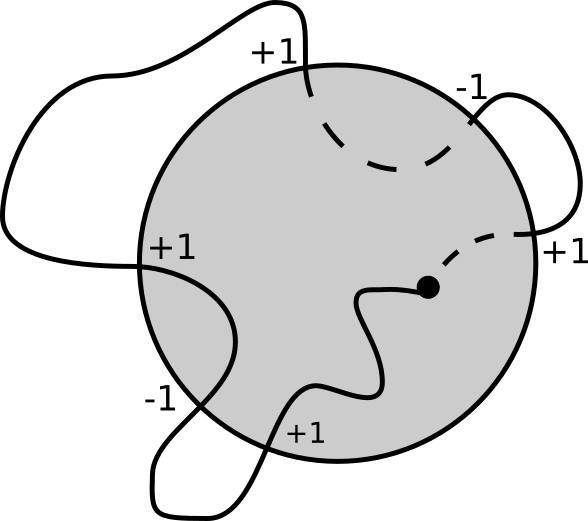}
		\subcaption{Sum: 2, Linking number: 1}
		\label{fig:cross}
	\end{minipage}
	\caption{}
\end{figure}
\subsection{Some Background on Blow-ups}
\label{sec:TI}

In \S \ref{sec:def} we introduced two common blow-up techniques, the rescaled flow and tangent flows, and we use both. 
For our purposes, there are stronger results for tangent flows, such as uniqueness against subsequence.
On the other hand, the rescaled flow is convenient since it only deals with one flow, and certain objects remain stationary. 
Luckily, the calculation in (\ref{eqn:ttr}) below shows that any sequence of times $s_i \nearrow \infty$ in the rescaled flow corresponds to a particular tangent flow. \\

White showed in~\cite{whimc} that, in the mean-convex case, all tangent flows are either planes, (generalized) cylinders, or spheres. 
Furthermore, we have from~\cite{sw} that planes are ruled out for times approaching the first singular time from below. 
Our main result is for $N=2$, so we deal mostly with cylinders as blow-up limits. 
These limits, however, are those of subsequences. 
The question arises whether the limit depends on the subsequence.
That is, the cylinder shape and radius are fixed, but can the orientation change per subsequence?
Colding and Minicozzi find in~\cite{cmu} that it cannot:
If one tangent flow is a cylinder, then they all are. In fact, they are all the same cylinder. \\

What about the rescaled flow? 
Huisken showed in~\cite{hui} that the rescaled flow, when centered around a singular point, converges smoothly on compact subsets to a stationary limit. 
This corresponds to a self-similar flow in the nonrescaled setting, which we know from~\cite{cm} indicates a cylinder.
So if we can connect this notion of rescaling to tangent flows, we can control limits of $\tM(s)$, because of the uniqueness of cylindrical tangent flows. 

For our tangent flow, let $(x_0,t_0)=(0,T)$.
Now choose some sequence $t_i \nearrow T$, with corresponding $s_i \nearrow \infty$.
Recalling $\l(t) = \left( 2(T-t) \right)^{-\frac{1}{2}}$, let $\mu_i=\l(t_i)$.
Then $\mu_i \nearrow \infty$. 
Then, using the notation from the \emph{rescaled flow} and \emph{tangent flow} definitions from \S \ref{sec:def}, we make the observation that, by (\ref{eqn:tgt})
 \begin{align}
	 \label{eqn:ttr}
	 M^i\left( -\frac{1}{2} \right) 
	 &= \mu_i M\left( T-\m_i^{-2}\left( \frac{1}{2} \right) \right) \nonumber \\
	 &= \l(t_i) M\left(T-\frac{1}{2}\l^{-2}(t_i)\right) 
	 = \l(t_i) M(t_i) 
	 = \tM(s_i) ,
 \end{align}
with the rescaled flow on the right, and the tangent flow rescalings on the left. 
\\

We now make these ideas more precise. 

\begin{lem}
	\thlabel{lem:cyl}
	Let $M$ be a type-I mean curvature flow with a singular point $x \in M^*$ at the first singular time $T$. 
	Assume there is at least one tangent flow at $(x,T)$ that is a generalized cylinder.
	Then $\dsp \lim_{s \to \infty} \tM(s) = \tM_{\infty}$ exists and is the same generalized cylinder. 
\end{lem}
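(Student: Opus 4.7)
The plan is to combine Huisken's subsequential convergence of the rescaled flow to a self-similar shrinker with Colding and Minicozzi's uniqueness of cylindrical tangent flows, using the identity (\ref{eqn:ttr}) as the bridge between the two blow-up procedures.

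First I would fix an arbitrary sequence $s_i \nearrow \infty$ with corresponding times $t_i \nearrow T$ and rescaling factors $\m_i := \l(t_i) \nearrow \infty$. The type-I hypothesis gives a uniform bound $|\tA|(\cdot,s) \le C$ in the rescaled flow, and the standard smooth compactness theory for mean curvature flow upgrades this to uniform $C^k$-bounds on every compact subset of $\R^{N+1}$. Hence $\{\tM(s_i)\}_i$ is precompact in the smooth topology on compact subsets, and I can extract a subsequence (still indexed by $i$) converging to some hypersurface $\hS$.

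Next I would identify $\hS$ as a time slice of a tangent flow at $(0,T)$. By the identity (\ref{eqn:ttr}), $M^i(-\tfrac{1}{2}) = \tM(s_i)$, and the type-I bound (applied on parabolic neighborhoods) lets me pass to a further subsequence along which $M^i$ converges smoothly on compact subsets of $\R^{N+1} \xx (-\infty,0)$ to a tangent flow $M^{\infty}$ at $(0,T)$ with $M^{\infty}(-\tfrac{1}{2}) = \hS$. By hypothesis some tangent flow at $(0,T)$ is a generalized cylinder, so the uniqueness result of~\cite{cmu} forces $M^{\infty}$ to be that same cylinder (same axis, same radius, same shrinking profile). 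Consequently $\hS$ is its slice at time $-\tfrac{1}{2}$, depending neither on the original sequence $s_i$ nor on the chosen subsequence. A standard subsequence argument then shows that $\dsp \lim_{s \to \infty} \tM(s) = \tM_\infty$ exists and is precisely this slice, which is itself a generalized cylinder congruent to the tangent-flow cylinder.

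The main obstacle is the upgrade from subsequential smooth convergence of the rescaled hypersurfaces $\tM(s_i)$ on compact subsets to subsequential smooth convergence of the full space-time parabolic rescalings $M^i$ on parabolic neighborhoods, since only the latter is a tangent flow in the sense of~\cite{cmu}; the type-I bound is doing the real work here, as it controls not merely a fixed time slice but every slice on a bounded parabolic window. Once this upgrade is in hand, the conclusion is a direct application of the Colding--Minicozzi uniqueness theorem.
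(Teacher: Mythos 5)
Your proposal takes essentially the same route as the paper: both fix an arbitrary sequence $s_i \nearrow \infty$, invoke the type-I bound together with Huisken's compactness (Theorem 3.4 of~\cite{hui}) to extract a subsequential limit of the rescaled flow, use the identity (\ref{eqn:ttr}) to recognize that limit as a time slice of a tangent flow at $(0,T)$, apply Colding--Minicozzi uniqueness~\cite{cmu} to pin down the cylinder independently of the subsequence, and then conclude the full limit $\tM_\infty$ exists by the every-subsequence-has-a-further-convergent-subsequence argument. Your closing remark about upgrading fixed-time-slice convergence to space-time (parabolic) convergence is exactly what Huisken's Theorem 3.4 packages for type-I flows, so the argument is sound and matches the paper's.
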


Convergence is smooth on compact subsets of $\R^3$. 
That is, for large $s$, $\tM(s)$ can be locally described as a graph, over $\tM_{\infty}$, of some function $u$, which is $C^k$-small. 
(See \textbf{Closeness} in \S \ref{sec:def}.)
We only need $k=2$, and will write ``$\xrightarrow{C_c^2}$'' for this type of convergence. 

\begin{proof}
	Assume, without loss of generality, the the singular point is the origin. 
	Assume there is at least one tangent flow at $(0,T)$ that is generalized-cylindrical. \\

	Let $s_i \nearrow \infty$.
	Since $M$ is type-I, Theorem 3.4 of~\cite{hui} provides a subsequence $\left\{ s_{i_j} \right\}$ and $\tM_{\infty}^{\left\{ s_{i_j} \right\}}$. 
	Now let $\mu_i = \l(t_i)$, so that by (\ref{eqn:ttr}),
	\[
		M^{i_j}\left( -\frac{1}{2} \right) 
		= \tM(s_{i_j}) \xrightarrow[j]{C_c^2} \tM_{\infty}^{\left\{ s_{i_j} \right\}} .
	\]
	That means $\tM_{\infty}^{ \left\{ s_{i_j} \right\}}$ is a tangent flow. 
	Since there is a generalized-cylindrical tangent flow by hypothesis, Theorem 0.2 of~\cite{cmu} says all tangent flows at $(0,T)$ are the very same generalized cylinder $\tM_{\infty}^{ \left\{ s_{i_j} \right\}}$. \\

	That is, we now have that every sequence $s_i \nearrow \infty$ has a subsequence $s_{i_j}$ for which $\tM_{\infty}^{ \left\{s_{i_j}\right\} }$ is the same generalized cylinder as above. 
	Then this must be true of all sequences of times $s_i \nearrow \infty$. 
	Finally, this means that $\dsp \lim_{s \to \infty} \tM(s)$ makes sense and is a unique generalized cylinder $\tM_{\infty} = \tM_{\infty}^{ \left\{ s_{i_j} \right\}}$. 
\end{proof}

\begin{lem}
	\thlabel{lem:sph}
	Let $M$ be a type-I mean curvature flow with a singular point $x \in M^*$ at the first singular time $T$. 
	Assume there is at least one tangent flow at $(x,T)$ that is a sphere.

	Then $\dsp \lim_{s \to \infty} \tM(s) = \tM_{\infty}$ exists and is the same sphere. 
\end{lem}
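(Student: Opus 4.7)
The plan is to mimic the proof of \thref{lem:cyl} almost verbatim, substituting ``sphere'' for ``generalized cylinder'' at each step. Assume without loss of generality that the singular point is the origin. Given an arbitrary sequence $s_i \nearrow \infty$, set $\mu_i = \l(t_i)$ and apply Theorem 3.4 of~\cite{hui}: since $M$ is type-I, we extract a subsequence $s_{i_j}$ along which $\tM(s_{i_j})$ converges $C^2$ on compact subsets to a stationary rescaled limit $\tM_\infty^{\{s_{i_j}\}}$. By the identity $M^{i_j}(-\tfrac{1}{2}) = \tM(s_{i_j})$ from~(\ref{eqn:ttr}), this limit is the time $-\tfrac{1}{2}$ slice of a tangent flow at $(0,T)$.

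The only new content, relative to \thref{lem:cyl}, is the uniqueness step. By hypothesis, one tangent flow at $(0,T)$ is a sphere. Combined with the classification from~\cite{whimc} and the exclusion of planes near the singular time from~\cite{sw}, every subsequential limit $\tM_\infty^{\{s_{i_j}\}}$ is either a sphere or a generalized cylinder. However, a sphere tangent flow is compact, and this compactness precludes any other option: once $\tM(s_{i_j})$ is $C^2$-close to the round sphere $\sqrt{2N}\,\SS^N$ along any subsequence, standard stability of the round sphere as an attractor of the rescaled flow (Huisken's convergence-to-a-round-point theorem in rescaled coordinates) forces $\tM(s)$ to remain $C^2$-close to $\sqrt{2N}\,\SS^N$ for \emph{all} sufficiently large $s$. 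In particular, no other subsequential limit, cylindrical or otherwise, is possible, and every tangent flow must be the same round sphere. Alternatively, one may appeal directly to a uniqueness-of-compact-tangent-flows result in place of this stability argument.

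The main obstacle is precisely this uniqueness step, since the analogous input for \thref{lem:cyl} was the deep Theorem 0.2 of~\cite{cmu}; here, the compactness of the sphere makes the argument more elementary. Once uniqueness is in hand, the conclusion is automatic: every sequence $s_i \nearrow \infty$ has a subsequence along which $\tM(s_{i_j})$ converges to the same sphere $\tM_\infty$, so $\lim_{s \to \infty} \tM(s)$ exists and equals $\tM_\infty$.
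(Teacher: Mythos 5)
Your core argument is the same as the paper's: both hinge on Huisken's convergence-to-a-round-point theorem — once a single rescaled slice $\tM(s_1)$ is $C^2$-close to the round sphere it is strictly convex, and Huisken then forces $\tM(s)$ to converge to the sphere for all $s>s_1$. The detour through the lem:cyl subsequence machinery and the White/Stolarski--White classification is unnecessary overhead here (the paper goes straight from the hypothesized spherical tangent flow, via (\ref{eqn:ttr}), to strict convexity to Huisken), and a small slip: with the rescaling $\l(t)=(2(T-t))^{-1/2}$ the stationary round sphere in rescaled coordinates has radius $\sqrt{N}$, not $\sqrt{2N}$.
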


\begin{proof}
	Assume, without loss of generality, that the singular point is the origin. 
	Assume there is some tangent flow at $(0,T)$ that is a sphere.  \\

	Then there is some sequence $\mu_i \nearrow \infty$ (assume $\mu_i >2$) for which each $M_{\mu_i}(t)$ is defined on $[-T,0)$  whose limit flow is a sphere. 
		More precisely,
		recall that $\l(t) = \left( 2(T-t) \right)^{-\frac{1}{2}}$ and $s(t) = -\frac{1}{2} \log (T-t)$ so $M(t) = \l^{-1}(t) \tM(s)$. 
		Let
		\[
			s_i = \log \mu_i - \frac{1}{2} \log(-t) \nearrow \infty .
		\]
		Then by (\ref{eqn:tgt}),
		\begin{align*}
			M_{\mu_i}(t) 
			= \mu_i M(T+\mu_i^{-2}t) 
			&= \mu_i \l^{-1}(T+\mu_i^{-2}t) M\left(\log \mu_i - \frac{1}{2} \log(-t) \right) \\
			& = \sqrt{-2t} \tM(s_i)
		\end{align*}

		Since $M$ is type-I, Theorem 3.4 of~\cite{hui} provides a subsequence $s_{i_j}$ so that $\tM(s_{i_j})$ converges to some $\tM_{\infty}^{ \left\{s_{i_j} \right\}}$ in $C_c^2$. 
		By hypothesis, that limit is a sphere. 
		Given the $C_c^2$ convergence, that means there is some large $s_1$ for which $\tM(s_1)$ is strictly convex. 
		Finally, we know from Huisken's main theorem in~\cite{huisph} that $\tM(s)$ remains convex after $s_1$ and converges to a sphere in $C^2$ as $s \nearrow \infty$. 
		That means $\tM_{\infty}$ makes sense and is a sphere. 
\end{proof}

\begin{defn}
	\thlabel{defn:sing}
	Assume $M$ has a singular point $x$ and that $\tM_{\infty}^x$ exists. 
	We call $x$ a spherical ((generalized) cylindrical) point if $\tM_{\infty}^x$ is a sphere ((generalized) cylinder).
	Here the spheres and (generalized) cylinders have the radii specified in \textbf{Cylinders} from~\S\ref{sec:def}. 
\end{defn}

\begin{cor}
	\thlabel{cor:sing}
	Let $M$ be a smoothly embedded, closed, type-I, mean-convex mean curvature flow. 

	Then the flow $M$ has at least one singular point and: 

	Either $M(t)$ becomes convex and shrinks to a point, or all singular points of $M$ at time $T$ are cylindrical. 
\end{cor}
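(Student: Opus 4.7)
The plan is to first produce a singular point from compactness plus well-posedness, then use the classification of mean-convex tangent flows to pin down a spherical-versus-cylindrical dichotomy at each singular point, and finally upgrade a single spherical tangent flow to global convexity.

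To see that at least one singular point exists, first note that a closed mean-convex hypersurface must have $T<\infty$: any large enclosing sphere shrinks to a point in finite time under MCF, and the comparison principle of \S\ref{sec:tools} forces $M(t)$ to become singular at least that fast. By \thref{thm:wp} the flow cannot be extended smoothly past $T$, so some $|A(p_i,t_i)|$ must blow up along a sequence $t_i \nearrow T$; compactness of $\M$ lets us extract a convergent subsequence $\bF(p_i,t_i) \to x$, producing an $x \in M^*$ that is singular in the sense of \S\ref{sec:def}.

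Next, fix any singular point $x \in M^*$. The type-I hypothesis together with Theorem 3.4 of~\cite{hui} extracts a tangent flow at $(x,T)$; White's theorem~\cite{whimc} asserts that every mean-convex tangent flow is a plane, a (generalized) cylinder, or a sphere; and~\cite{sw} rules out planes at the first singular time. Hence each $x \in M^*$ is either spherical or (generalized) cylindrical in the sense of \thref{defn:sing}.

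It then remains to split on whether any spherical singular point exists. If some $x \in M^*$ is spherical, then \thref{lem:sph} yields smooth convergence of $\tbF_x(\cdot,s)$ to a round sphere, which (after unwinding the rescaling) forces $M(t)$ to be strictly convex for $t$ sufficiently close to $T$; Huisken's classical convergence theorem for convex hypersurfaces~\cite{huisph} then shows $M(t)$ stays convex and shrinks to a round point, placing us in the first alternative (and forcing $x$ to be the unique singular point). Otherwise no singular point is spherical, and by the dichotomy every singular point of $M^*$ is (generalized) cylindrical. The main obstacle is the upgrade from ``one spherical tangent flow at $x$'' to ``global convexity of $M(t)$,'' but this is exactly what \thref{lem:sph} packages: its proof uses the type-I bound to promote $C_c^2$-convergence of the rescaled flow to strict convexity of $\tM(s_1)$, after which Huisken's convex-hypersurface theorem takes over.
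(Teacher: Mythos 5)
Your proposal is correct and follows essentially the same route the paper takes: compactness plus the comparison principle for existence of a singular point, then White's classification combined with the Stone--White exclusion of planes, then the packaged uniqueness results (\thref{lem:cyl}, \thref{lem:sph}) to make the spherical/cylindrical dichotomy well-defined per point, and finally Huisken's convex convergence theorem to turn one spherical tangent flow into global convexity. The only cosmetic difference is that you make the existence of a singular point slightly more explicit (via the $|A|$-blowup characterization and compactness of $\M$), whereas the paper simply asserts it after invoking the comparison principle.
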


\begin{proof}
	Since $M_0$ is compact, one can place a sphere containing it. 
	By the comparison principle, $M(t)$ must become singular before the sphere collapses. 
	Call the first singular time $T$. 
	\\

	Without loss of generality, let the origin be a singular point of $M$. 
	Take a sequence of rescale times $s_i \nearrow \infty$. 
	Since $M$ is type-I, we again know from Theorem 3.4 of~\cite{hui}, that there is a subsequence $s_{i_j}$ for which $\tM_{\infty}^{ \left\{ s_{i_j} \right\}}$ exists. 
	By (\ref{eqn:ttr}), that is a tangent flow. 
	Therefore, at least one tangent flow at $(0,T)$ exists. 
	\\

	Since $M(t)$ is mean-convex, Theorem 1.1 of~\cite{whimc} says every tangent flow at $(0,T)$ is a plane, a sphere, or a cylinder. 
	However, Corollary 8.1 in~\cite{sw} precludes any planar tangent flows at the first singular time. 
	Thus every tangent flow at $(0,T)$ is a sphere or a cylinder. 
	Then by \thref{lem:cyl} and \thref{lem:sph}, every singular point is either spherical or cylindrical. \\

	Now assume $M$ has a spherical point. 
	Without loss of generality, assume it is the origin. 
	If $\tM_{\infty}$ is a sphere, then there is some $s$ for which $\tM(s)$ is convex. 
	Then the same is true of $M(t)$ at some time $t$. 
	Therefore $M(t)$ collapses to a point, by the main theorem of~\cite{huisph}. 
	Thus, the existence of spherical points and the existence of cylindrical points are mutually exclusive.

%
%

\end{proof}

We conclude with a strengthening of \thref{lem:S}. 
Although the precise statement of \thref{lem:S} is still convenient for proving \thref{cor:profile}, the type-I assumption grants us control over the flow of specific points $\bF(p,t)$. As in Lemma 3.3 in~\cite{hui}:

\begin{lem}
	\thlabel{lem:TI}
	Let $M$ be a type-I mean curvature flow with first singular time $T$. 

	Then for each $p \in \M$ there is a $p^* \in \R^3$ and $C>0$ for which 
	\[
		|\bF(p,t)-p^*| \le C \left( 2(T-t) \right)^{\frac{1}{2}} 
		= C \l^{-1}(t) .
	\]
\end{lem}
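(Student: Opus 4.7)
}

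The plan is to bound the velocity $|\partial_t \bF(p,t)| = |H(p,t)|$ using the type-I hypothesis and then show that the trajectory $t \mapsto \bF(p,t)$ is Cauchy as $t \nearrow T$, with the Cauchy tail decaying at the desired rate.

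First, recall that $\partial_t \bF = -H\nu$, so $|\partial_t \bF(p,t)| = |H(p,t)|$. Since $|H| \le \sqrt{N}\,|A|$ pointwise and $M$ is type-I, there is a constant $C_0>0$ with
\[
|\partial_t \bF(p,t)| \le \sqrt{N}\,|A(p,t)| \le C_0 \left(2(T-t)\right)^{-1/2} \qquad \text{for all } t \in [0,T).
\]
This is the single ingredient driving the argument.

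Next, for $0 \le t < t' < T$, integrate the velocity bound along the trajectory:
\[
|\bF(p,t') - \bF(p,t)| \le \int_t^{t'} |H(p,\tau)| \, d\tau \le C_0 \int_t^{t'} \left(2(T-\tau)\right)^{-1/2} d\tau = C_0 \left( (2(T-t))^{1/2} - (2(T-t'))^{1/2} \right).
\]
The right-hand side tends to $0$ as $t,t' \nearrow T$, so $\bF(p,\cdot)$ is Cauchy in $\R^{N+1}$ and converges to some $p^* \in \R^{N+1}$. Letting $t' \nearrow T$ in the displayed inequality produces
\[
|\bF(p,t) - p^*| \le C_0 \left(2(T-t)\right)^{1/2} = C_0\, \lambda^{-1}(t),
\]
which is the required estimate with $C = C_0$.

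There is no genuine obstacle here: the estimate is a direct consequence of the type-I bound and integrability of $\tau \mapsto (T-\tau)^{-1/2}$ near $\tau = T$. The only minor point worth noting is that the limit point $p^*$ is independent of the sequence $t_i \nearrow T$ chosen, which follows automatically since $\bF(p,\cdot)$ is Cauchy (not merely subsequentially convergent); this is precisely what makes the map $p \mapsto p^*$ well-defined and underlies the identification of $M^*$ made in the remark following \thref{lem:S}.
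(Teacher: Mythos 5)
Your proof is correct and follows essentially the same route as the paper: bound the speed $|H|$ by the type-I hypothesis, integrate the resulting $(T-\tau)^{-1/2}$ rate to get a Cauchy estimate, and conclude existence of $p^*$ together with the stated convergence rate. The only cosmetic difference is that you phrase the Cauchy step with two times $t<t'$ and pass to the limit, whereas the paper first notes sequences $\bF(p,t_i)$ are Cauchy and then writes $p^*-\bF(p,t)$ directly as $\int_t^T -H\nu\,d\tau$; both are the same computation.
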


\begin{proof}
	Given the type-I bound $H \le C \left( 2(T-t) \right)^{-\frac{1}{2}}$ for some $C>0$,
	\[
		 \left| \int_t^T H\left( \bF(p,\tau) \right) \: d \tau \right|
		\le \int_t^T C \left( 2(T-\tau) \right)^{-\frac{1}{2}} \: d \tau 
		= \left( 2(T-t) \right)^{\frac{1}{2}} .
	\]
	Then for every sequence $t_i \nearrow T$, $\bF(p,t_i)$ is a Cauchy sequence. 
	Thus there is a $\dsp p^* := \lim_{t \to T} \bF(p,t)$ exists.
	Since
		\[
			p^*-\bF(p,t) = \int_t^T \bd_{\tau} \bF(p,\t) \dtau
			= \int_t^T -H\left( \bF(p,\tau)  \right) \nu \dtau ,
		\]
	we are done. 
\end{proof}

This lemma will be useful more than once in \S 2.

\subsection{Proof of \thref{thm:T}}
\label{sec:T}

The following proof uses Hausdorff distance, but our notion of closeness is in terms of $\|f_n\|_{C^k}$. 
We must connect the two.

\begin{lem}
	\thlabel{lem:fth}
	Let $\Sigma$ and $\hS$ be hypersurfaces such that $\hS$ is a graph of a smooth function $f$ over $\Sigma$. 
	Then $d_H(\hS,\Sigma) \le \|f\|_{C^0}$. 
\end{lem}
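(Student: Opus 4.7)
The plan is to unwind the definition of Hausdorff distance and directly exploit the fact that every point of $\hat{\Sigma}$ is paired with a canonical point of $\Sigma$ via the graph map $\varphi(x) = x + f(x)\nu(x)$. Recall that
\[
d_H(\hat{\Sigma},\Sigma) = \max\!\left\{\sup_{\hat{y}\in\hat{\Sigma}} \dist(\hat{y},\Sigma),\ \sup_{x\in\Sigma}\dist(x,\hat{\Sigma})\right\},
\]
so it suffices to bound each of the two one-sided suprema by $\|f\|_{C^0}$.

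First I would take an arbitrary $\hat{y}\in\hat{\Sigma}$. By hypothesis, $\varphi:\Sigma\to\hat{\Sigma}$ is a diffeomorphism, so there is a unique $x\in\Sigma$ with $\hat{y} = x + f(x)\nu(x)$. Since $\nu(x)$ is a unit vector,
\[
|\hat{y}-x| = |f(x)| \le \|f\|_{C^0},
\]
and consequently $\dist(\hat{y},\Sigma)\le |f(x)|\le \|f\|_{C^0}$. Taking the supremum over $\hat{y}\in\hat{\Sigma}$ gives one of the two inequalities needed.

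Next I would take an arbitrary $x\in\Sigma$ and note that its image $\varphi(x)=x+f(x)\nu(x)$ is a specific point of $\hat{\Sigma}$, so again $\dist(x,\hat{\Sigma})\le |\varphi(x)-x|=|f(x)|\le\|f\|_{C^0}$. Taking the supremum over $x\in\Sigma$ yields the other inequality. Combining the two gives $d_H(\hat{\Sigma},\Sigma)\le\|f\|_{C^0}$, as desired.

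There is no real obstacle here: the only subtlety worth pointing out is that the bound uses only the $C^0$ norm of $f$, even though the graph hypothesis implicitly asks for more regularity to make $\varphi$ a diffeomorphism. The unit length of $\nu$ is what turns the pointwise magnitude of $f$ into the pointwise Euclidean distance between paired points, and since this pairing is available in both directions of the Hausdorff distance, no compactness or measure-theoretic argument is needed.
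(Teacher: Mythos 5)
Your proof is correct and follows essentially the same approach as the paper: both use the graph map $\varphi(x)=x+f(x)\nu$ to pair points of $\Sigma$ with points of $\hS$, bound each one-sided supremum in $d_H$ by $\|f\|_{C^0}$, and combine. Your write-up is if anything a touch cleaner, directly bounding $\dist(\hat y,\Sigma)$ and $\dist(x,\hS)$ rather than passing through the paper's auxiliary function $g(y)=f(\varphi^{-1}(y))$.
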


\begin{proof}
	First of all, since the mapping $y = \f(x) = x+f(x) \nu$ is a homeomorphism, each $x \in \Sigma$ is uniquely paired with a $y \in \hS$, and vice versa. 
	\\

	For a given pair of points $(x,y) \in \Sigma \times \hS$, $|f(x)| = |x-y|$. 
	However, $y$ might not be the closest point on $\hS$ to $x$, so $|f(x)| \ge \inf_{y} |x-y|$.
	Then
	\[
		\|f\|_{C^0} =\sup_x |f(x)| \ge \sup_x \inf_y |y-x| .
	\]

	Now let $g(y) := f\left( \f^{-1}(y) \right)$. 
	Then 
	\[
		\|f\|_{C^0} = \sup_x|f(x)| = \sup_y |g(y)| \ge \sup_y \inf_x |x-y|.
	\]
	$|g(x)| \ge \inf_x |y-x|$. 
	Thus we have
	\[
		\|f\|_{C^0} 
		\ge \max \left\{ \sup_{x \in X} \inf_{y \in Y} |x-y|, \sup_{y \in Y} \inf_{x \in X} |y-x| \right\} 
		= d_H(\Sigma,\hS) .
	\]
\end{proof}

\begin{proof}[Proof of \thref{thm:T}]
	Let $\BM$ and a sequence $\{M_n\}_n$ be as in \thref{thm:T}.  
	By well-posedness, we already have that $\dsp \liminf_{n \to \infty} T_n \ge \BT$. 
	So we need only show that $\dsp \limsup_{n \to \infty} T_n \le \BT$. 
	For an illustration of the following, see Figure \ref{fig:inside}.
	\\

	Let $0<\varepsilon$. 
	Define $\hM_n(t)=M_n(t+\e)$.
	We need $\hT_n > \e$ for the following proof to make sense (since the intervals of existence times for $\BM$ and $\hM_n$ need to overlap).
	However, the goal at the end of the proof is to show that, for large $n$, $\hT_n < \BT+\e$, so if $\hT_n \not>\e$, we are done. 
	So we can just assume $\hT_n > \e$. 
	\\

	Now $\hM_{n0} = M_n(\varepsilon)$. 
	Since $\BM(t)$ strictly is mean-convex for $t=[\frac{\varepsilon}{2},\BT)$, its velocity at every point is inward with positive speed. 
Thus $\hM_0(\e) \sub \BW_0$, and we have the Hausdorff distance $d=d_H \lp \BM(\e),\BM_0 \rp > 0$. 
By well-posedness, there is an $n_0>0$ so that if $n \ge n_0$, then $d_H(\BM(\e),\hM_{n0})<\frac{d}{2}$ (see \thref{lem:fth}).
So assume $n \ge n_0$.
Rearranging
	\[ 
		d_H(\BM(\e),\BM_0) \le  d_H(\BM(\e),\hM_{n0}) + d_H(\hM_{n0},\BM_0) 
	\]
	gets us
	\[
		d_H(\hM_{n0},\BM_0) 
		\ge d_H(\BM(\e),\BM_0) - d_H(\BM(\e),\hM_{n0}) 
		> d-\frac{d}{2} 
		=\frac{d}{2} 
		> 0 .
	\]
Thus $\hM_{n0} \sub \BW_0$. \\

\begin{figure}[h]
	\centering
\includegraphics[scale=.3]{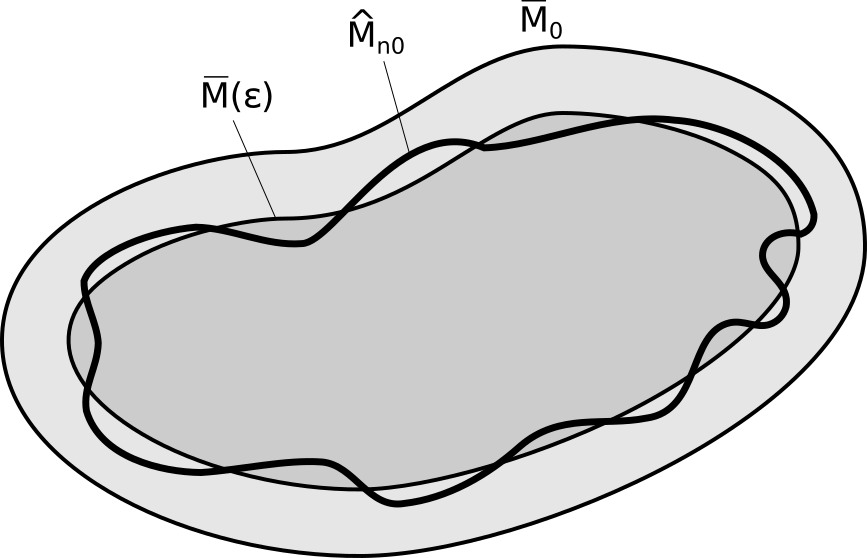}
\caption{$\hM_{n0}$ is closer to $\BM(\e)$ than $\BM_0$, so is contained in $\BW_0$.}
\label{fig:inside}
\end{figure}

Because $\BM(t) \to 0$, the comparison principle tells us $\hM_n(t)$ must become singular no later than $\BM(t)$ does.
So we see that $T_n = \hT_n + \e  \le \BT + \e$.
Since that is true for any $n \ge n_0$, we have $\dsp \limsup_{n \to \infty} T_n \le \BT + \e$.
Since $\e$ was arbitrary, we are done. 

\end{proof}

\section{``Anatomy'' of $M$}
\label{sec:anatomy}

\newcounter{conditions}

\thref{thm:mc} only applies to surfaces. 
We can tell from the proof of \thref{thm:T} that the case when $\tM_{\infty}$ is a sphere is easily resolved, since in this case $M(t)$ would shrink to a point. 
However, we will need much more understanding about the case when $\tM_{\infty}$ is a cylinder. 
The key to all of our analyses is the neck structure that forms at a type-I cylindrical singularity. 
Nearly all the results in this section make heavy use of this structure (detailed in \thref{lem:neck} and \thref{defn:sets}), so we make the following assumptions for \emph{this entire section}. 

\begin{enumerate}[(i)]
	\item $M_0$ (and therefore each $M(t)$) is a smoothly embedded, closed, mean-convex surface. \label{it:first}
	\item $M$ is type-I
	\item $M$ has only cylindrical singularities
	\item The singularity in question is at the origin, and the axis of its cylinder is the $x_2$-axis \label{it:last}
		\setcounter{conditions}{\value{enumi}}
\end{enumerate}
	\subsection{Neck Formation} \label{sec:neck}
	
	We need to describe very precisely what we mean by neck structure.

	\begin{lem} 
		\thlabel{lem:neck}
		Define the solid truncated cylinder
		\[
			\tK = \left\{ \xi: |\xi_2| \le 4 \mbox{ and } \sqrt{\xi_1^2+ \xi_3^2} \le 4 \right\} .
		\]
		 Let $\tn_{\infty}$ to be the outward normal on $\tM_{\infty}$. 
		 (See Figure~\ref{fig:K})
		Then the surface $\tM_{\infty}$ is the unit-radius cylinder whose axis is the $\xi_2$-axis, and the following hold:

		Furthermore for every $0< \e <1$ 
		there is $s_{\=}>0$ (``s-neck'') and smooth $u: (\tM_{\infty} \cap \tK) \times [s_{\=},\infty) \to \R$ such that, if $s>s_{\=}$, then for each $\xi \in \tM_{\infty} \cap \tK$, the mapping 
			\[
				\xi \mapsto \xi+u(\xi,s) \tn_{\infty}
			\]
			is a homeomorphism from $\tM_{\infty} \cap \tK$ to $\tM(s) \cap \tK$, and $\left\|u \right\|_{C^2} < \varepsilon$. 

	\end{lem}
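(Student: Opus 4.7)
The plan is to read off both conclusions directly from \thref{lem:cyl} together with the definition of \textbf{Cylinders} in \S\ref{sec:def}. By the standing hypotheses (ii)--(iv), $M$ is a type-I flow with a cylindrical singularity at the origin whose rescaled limit has the $x_2$-axis (equivalently, the $\xi_2$-axis) as its axis. Applying \thref{lem:cyl} at $x=0$ gives $\tM_\infty := \lim_{s \to \infty} \tM(s)$ as a generalized cylinder with this axis, where the convergence is $\xrightarrow{C_c^2}$. Because $N=2$, the only admissible value in the definition of generalized cylinder is $m=1$, which forces $\tM_\infty = \SS^1 \times \R$ with radius $\sqrt{1} = 1$. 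That settles the first assertion and pins down both the shape and the axis.

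For the graph representation, I would unpack what $\xrightarrow{C_c^2}$ says on the compact truncated solid cylinder $\tK$. First, I would slightly enlarge $\tK$ to a compact $\tK'$ with $\tK \Sub \tK'$, say by replacing the constants $4$ in the definition of $\tK$ with $4+\delta$ for a small $\delta > 0$. The $C_c^2$ convergence then supplies, for any prescribed tolerance, a threshold $s_{\=} > 0$ so that for all $s \ge s_{\=}$ the portion $\tM(s) \cap \tK'$ lies in an arbitrarily thin tubular neighborhood of $\tM_\infty \cap \tK'$, with tangent planes and second fundamental forms uniformly close to those of the round cylinder. A standard appeal to the tubular neighborhood theorem (projecting along $\tn_\infty$ via the implicit function theorem) then produces a smooth $u(\cdot,s): \tM_\infty \cap \tK' \to \R$ with $\|u(\cdot,s)\|_{C^2} < \e$ so that $\xi \mapsto \xi + u(\xi,s)\tn_\infty(\xi)$ parameterizes a piece of $\tM(s)$ near $\tM_\infty \cap \tK'$. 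Restricting the domain to $\tM_\infty \cap \tK$ gives the map in the statement.

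The main (and only real) obstacle I anticipate is ensuring that the graph \emph{exhausts} all of $\tM(s) \cap \tK$, rather than missing an extraneous fold or sheet of $\tM(s)$ that happens to lie in $\tK$. To handle this I would extract from $C_c^2$-convergence the Hausdorff estimate that every point of $\tM(s) \cap \tK$ lies within $\e$ of $\tM_\infty \cap \tK'$, which confines all of $\tM(s) \cap \tK$ to the tubular neighborhood above. Embeddedness of $\tM(s)$, preserved by the flow (see \textbf{Embedding Preservation}), combined with the $C^2$-closeness of tangent planes and curvatures to the round cylinder's on $\tK'$, rules out a second sheet folded back inside $\tK$: such a sheet would either have to meet the graph or exhibit curvature incompatible with the nearly flat cylindrical geometry. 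With no extraneous pieces, the normal-graph map is injective and onto, hence a homeomorphism from $\tM_\infty \cap \tK$ onto $\tM(s) \cap \tK$, completing the proof.
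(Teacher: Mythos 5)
Your argument tracks the paper's proof closely: both identify $\tM_\infty$ as the unique unit cylinder via the uniqueness of the cylindrical rescaled limit (Lemma~\ref{lem:cyl} resp.\ Definition~\ref{defn:sing}), and both then read the graph representation off of Huisken's $C^2$ convergence on compact subsets. The two minor divergences are as follows. For the radius, you appeal to the normalization built into the definition of \textbf{Cylinders} in \S\ref{sec:def} (which pins $\SS^1 \times \R$ at radius $\sqrt{1} = 1$), while the paper re-derives $r=1$ from the self-shrinker relation $0 = (\partial_s \tbF)^\perp = (r - \tfrac{1}{r})\tn$ on the stationary limit; these are equivalent. For the homeomorphism claim, you explicitly raise and treat the question of whether the normal graph exhausts $\tM(s) \cap \tK$, whereas the paper simply asserts that the lemma is ``a particular (consequent, but not equivalent) expression of that convergence'' and leaves the exhaustiveness of the graph implicit. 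Your instinct to flag it is sound, since $C^2$ convergence on compacts does not by itself forbid a second sheet in a fixed window; but your resolution (``such a sheet would $\ldots$ exhibit curvature incompatible with the nearly flat cylindrical geometry'') is still informal. A cleaner closing would combine the Hausdorff bound confining $\tM(s) \cap \tK$ to a tubular neighborhood of $\tM_\infty$ inside $\tK'$ of thickness below the reach, the $C^1$-closeness of tangent planes making the normal projection a local diffeomorphism, embeddedness, and a degree/covering argument over the connected annulus $\tM_\infty \cap \tK$ to conclude the projection is exactly one-to-one. Neither the paper nor your write-up spells this out in full, so this is at most a refinement of shared slack rather than a gap.
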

	\begin{rk}
		\thlabel{rk:lid}
		Since $\|u\|_{C^2}<\e< 1$, $\tM$ does not intersect the ``side'' of $\tK$, and must intersect the ``ends'' of $\tK$ transversely.
		Also $\tn_{\infty}$ is parallel to the ``lids'' of $\tK$ ($\tK \cap \left\{ \xi_2 = 4 \right\}$), so there are no questions about the surjectivity of the homeomorphism. 
	\end{rk}

\begin{proof}
	By~\thref{defn:sing}, $\tM_{\infty}$ exists and is a specific cylinder with some radius $r$ to be found. 
	From (\ref{eqn:rescale}) we know 
	\[
		0 = (\bd_s \tbF)^{\perp} = \tbF^{\perp} - \tH \tn 
		= \left( r-\frac{1}{r} \right) \tn ,
	\]
	so we must have $r=1$.
	Theorem 3.4 of~\cite{hui} says that the convergence of $\tM(s)$ to $\tM_{\infty}$ is $C^2$ in compact subsets of $\R^3$ (or to any order one likes), in the sense of \textbf{Closeness}, as in \S \ref{sec:def}. 
\\

	The mapping $\xi \mapsto \xi+u(\xi,s)$ is obviously continuous and invertible by its definition. 
	Since $\tM_{\infty} \cap \tK$ is compact, the mapping is a homeomorphism. 
	Now this lemma is just a particular (consequent, but not equivalent) expression of that convergence. 
\end{proof}

\paragraph{Note:}
For the most part, depictions will follow: 
\begin{center}
	\begin{tabular}[]{rcl}
		$x_1$-axis &:& longitudinal (into the page)\\
		$x_2$-axis &:& lateral \\
		$x_3$-axis &:& vertical
	\end{tabular} 
\end{center}
And we also sometimes use $x_2$ as a coordinate function. 

\begin{defn}[Useful Sets and Quantities]
	\thlabel{defn:sets}
			For later use, also define the disk $\tDD= \{\xi_2=0\} \cap \tK$, orthogonal to the axis of $\tK$. 
			\\

		We write $K$ and $\DD$, without tildes to denote their nonrescaled versions.
		That is $K(t) = \l^{-1}(t)\tK$ and $\DD(t) = \l^{-1}(t) \tDD$. 
			Then we can write $M_{\=}(t) = M(t) \cap K(t)$, and $\W_{\=}(t) = Int(\W(t) \cap K(t))$ (``$M$-neck'' and ``$\W$-neck'').
\\

	Throughout this paper, we refer to $t_{\=}$ (``t-neck''), corresponding to $s_{\=}$.
	After time $t_{\=}$ ($s_{\=}$), we say $M(t)$ ($\tM(s)$) ``has a neck'' for $t \in [t_{\=},T)$ ($s_{\=} \in [s_{\=},\infty)$). 
		Any mention of $t_{\=}$ ($s_{\=}$) hereafter implies the presence of all the structures given in \thref{lem:neck} and \thref{defn:sets}, in $M(t)$ ($\tM(s)$).
			(See Figure~\ref{fig:neck})
\begin{figure}[h]
	\centering
	\includegraphics[scale=.3]{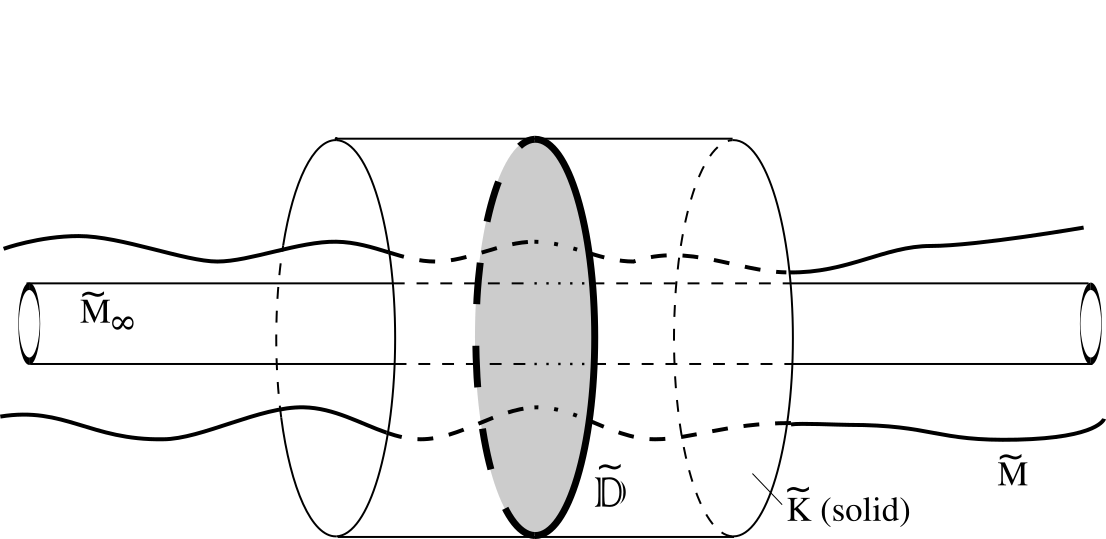}
	\caption{$\tK$ and $\tDD$.}
	\label{fig:K}
\end{figure}
		\end{defn}

		\subsection{Bulb Preservation}

		In the proof of \thref{thm:mc}, it turns out the case where $\BM_0$ is not simply connected is simpler than when it is, due to the the Hopf link construction in \S \ref{sec:tools}. 
	However, in the simply connected case, we need to place spheres inside the bulbs.
	In order to place the spheres as planned, we hope that removing $\W_{\=}(t)$ splits $\W(t)$ into two connected components that always have enough room to fit the desired spheres,
	as in Figures~\ref{fig:neck} and~\ref{fig:ball}.
	So in addition to conditions (\ref{it:first})-(\ref{it:last}), for the \emph{rest of this section} we assume
	\begin{enumerate}[(i)]
			\setcounter{enumi}{\value{conditions}}
		\item \label{it:simply} $M_0$ (and therefore each $M(t)$) is simply connected.
			\setcounter{conditions}{\value{enumi}}
	\end{enumerate}

\begin{lem}
	\thlabel{lem:comp}
	Assume that $M_0$ is simply connected.

	Then for every $t \in [t_{\=},T)$, $Cl(M(t) \wo M_{\=}(t))$ is two (path-) connected components. 
		The same can be said for $\W(t) \wo Cl(\W_{\=}(t))$. 
\end{lem}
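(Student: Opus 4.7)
The plan is purely topological, combining the global topology of $M(t)$ with the local neck structure supplied by \thref{lem:neck}. Since $M_0$ is smoothly embedded, closed, and simply connected, the classification of closed surfaces together with embedding preservation gives $M(t) \cong \SS^2$ for every $t \in [0,T)$, and the smooth Schoenflies theorem then yields that $Cl(\W(t))$ is a smoothly embedded closed $3$-ball in $\R^3$. By \thref{lem:neck} the rescaled neck $\tM(s) \cap \tK$ is a $C^2$-graph over $\tM_\infty \cap \tK \cong \SS^1 \times [-4,4]$; since this graph map is a homeomorphism and rescaling by $\l^{-1}(t)$ is a diffeomorphism, $M_{\=}(t)$ is a smoothly embedded closed annulus in $M(t)$, with two disjoint boundary circles $C_+,C_-$ lying on the two lids of $K(t)$ (transversely, by \thref{rk:lid}).

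For the surface claim, let $\gamma \sub M_{\=}(t)$ be the core circle corresponding to the cross-section $\{\xi_2 = 0\} \cap \tM_\infty$ under the graph map. Since $\gamma$ is a simple closed curve embedded in $M(t) \cong \SS^2$, the Jordan--Schoenflies theorem separates $M(t)$ into two open topological disks $D_1$ and $D_2$ with common boundary $\gamma$. The annulus $M_{\=}(t)$ is a collar neighborhood of $\gamma$, so each $D_i \wo M_{\=}(t)$ is an open disk bounded by $C_+$ or $C_-$; taking closures yields two closed topological disks, which are exactly the connected components of $Cl(M(t) \wo M_{\=}(t))$.

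For the enclosed-region claim, let $E_\pm := Cl(\W(t)) \cap (\text{lid}_\pm \text{ of } K(t))$. Each lid is a closed $2$-disk meeting $M(t)$ transversely in the single simple closed curve $C_\pm$ (by \thref{rk:lid}), so the Jordan curve theorem applied inside the lid shows $E_\pm$ is a closed topological $2$-disk with $\partial E_\pm = C_\pm$, properly embedded in the $3$-ball $Cl(\W(t))$. Since the disks $E_+$ and $E_-$ are disjoint (they lie in parallel lids), two applications of the smooth $3$-dimensional Schoenflies theorem separate $Cl(\W(t))$ into three closed $3$-balls; by construction the middle one is $Cl(\W(t)) \cap K(t)$, whose interior in $\R^3$ is precisely $\W_{\=}(t)$. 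Removing its closure from the open $3$-ball $\W(t)$ leaves the interiors of the two outer $3$-balls, hence exactly two components. The main obstacle is bookkeeping rather than genuine difficulty: the argument is an assembly of standard separation theorems in dimensions $2$ and $3$, and the key geometric inputs --- the neck being an embedded annulus and the lids cutting out $2$-disks --- are already encoded in \thref{lem:neck} and \thref{rk:lid}.
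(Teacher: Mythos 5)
Your proof is correct and reaches the same conclusion as the paper, but by a genuinely different and somewhat heavier route. For the surface part both arguments bottom out in the Jordan curve theorem on $\SS^2$; you are a bit more explicit about $M_{\=}(t)$ being an embedded annulus with boundary circles $C_\pm$ on the lids, which the paper compresses into ``$M(t) \cap K(t)$ is homotopic to a circle in $M(t)$.'' The real divergence is the region part. The paper works purely with $\pi_1$: it notes that $\W(t) \wo Cl(\W_{\=}(t))$ is open (hence locally path-connected), derives a contradiction to simple connectivity of $\W(t)$ if the complement of the neck were a single component, and then bounds the number of components above by two because $\bd \W_{\=}(t) \cap \bd\bigl(\W(t) \wo \W_{\=}(t)\bigr)$ consists of exactly the two lid-disks. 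You instead identify $Cl(\W(t))$ as a smooth $3$-ball via the $3$-dimensional smooth Schoenflies theorem and cut it along the properly embedded lid-disks $E_\pm$, using that a smoothly properly embedded disk separates a $3$-ball into two $3$-balls. This buys you a sharper conclusion --- both bulbs and the neck closure are topological $3$-balls --- and makes the separation step completely explicit where the paper's ``construct a nontrivial path through the neck'' is stated rather tersely; the trade-off is reliance on nontrivial $3$-manifold topology (Alexander/Schoenflies and disk separation) that the paper's lighter $\pi_1$-plus-boundary-count argument avoids. Both proofs use the same geometric input from \thref{lem:neck} and \thref{rk:lid}, namely that $M(t) \cap K(t)$ is exactly the annular graph and that $M(t)$ meets each lid in a single circle $C_\pm$ (so $E_\pm$ really is a disk); granting those, neither proof has a gap.
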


(See again Figure~\ref{fig:neck})

\begin{proof}
	By definition of $t_{\=}$, $M(t)$ has a neck for $t \in [t_{\=},T)$. 
	Note specifically that $N=2$.
	Since the flow preserves embedding, $M(t)$ is simply connected for $t \in [0,T)$. 
	Since $M(t) \cap K(t)$ is homotopic to a circle in $M(t)$, the result follows from the Jordan Curve Theorem for spheres.
	\\

	Now for $\W(t) \wo \W_{\=}(t)$. 
	We know $\W(t)$ is simply connected. 
	Since $\W(t) \wo Cl(\W_{\=}(t))$ is open, it is locally path-connected. 
	Were $\W(t) \wo Cl(\W_{\=}(t))$ one component, then $\W(t)$ would not be simply connected, since we could construct a nontrivial path in $\W(t)$ that passes through $\W_{\=}(t)$. 
	That is a contradiction. 
	Considering \thref{rk:lid}, $\bd \W_{\=}(t) \cap \bd \left( \W(t) \wo \W_{\=}(t) \right)$ is two components, the ``lids'', because of the way $K(t)$ is constructed. 
	Then any component of $\W(t) \wo Cl(\W_{\=}(t))$ is connected to one of the lids. 
	Thus, there cannot be any more than two components to $\W(t) \wo Cl\left( \W_{\=}(t) \right)$.

\end{proof}

\begin{defn}
	\thlabel{defn:bulb}
	For $t \in [t_{\=},T)$, consider the two components of $M(t) \wo M_{\=}(t)$.
		Call them $M_{\rb}(t)$ and $M_{\lb}(t)$, chosen so that $x_2(Cl(M_{\lb}(t)) \cap K(t))<0$ and $x_2(Cl(M_{\rb}(t)) \cap K(t))>0$.
		Similarly define $\W_{\lb}(t)$ and $\W_{\rb}(t)$ to be the two components of $\W(t) \wo Cl(\W_{\=}(t))$ with $x_2(\W_{\lb}(t) \cap K(t)) < 0$ and $x_2(\W_{\rb}(t) \cap K(t)) > 0$.
		We may use ``bulbs'' to refer to $M_{\lb}(t)$ and $M_{\rb}(t)$ \emph{or} 
		$\W_{\lb}(t)$ and $\W_{\rb}(t)$. 
	It should be clear from context whether we mean the surface or its interior region. 
	\\
\end{defn}

\begin{figure}[h]
	\centering
	\includegraphics[scale=.3,trim=0 3cm 0 3cm,clip]{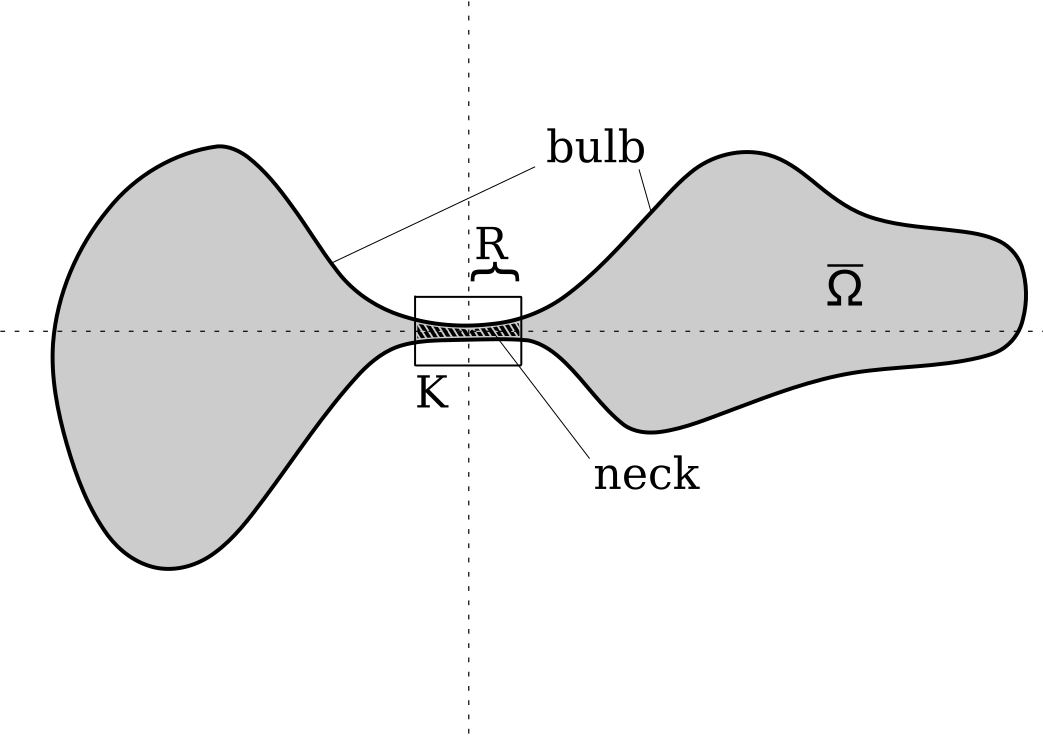}
	\caption{Neck in $K(t)$.}
	\label{fig:neck}
\end{figure}

%

Recall the goal is to find, for each bulb, a $p \in \M$ for which $H\left(\bF(p,t)\right)$ stays bounded. 
However, in order to do that, we need to know that the bulbs do not collapse to the origin. 
Even though each bulb has points outside the neck throughout the flow, either bulb could shrink into the origin at a rate slower than the neck. \\

Once we know the existence of limit bulbs outside the origin, then we can address their regularity. 
\\

\begin{defn} \thlabel{defn:limbulb}
	Define $M_{\rb}^*$ to be the set of points $p^*$ for $p \in M_{\rb}(t_{\=})$. 

	Define $\W^* := \bigcap_{t \in [0,T)} \W(t)$ and $\W_{\rb}^* := \bigcap_{t \in [t_{\=},T)} \W_{\rb}(t)$. 

		Define $M_{\lb}^*$ and $\W_{\lb}^*$ similarly.

\end{defn}

\begin{rk}
	\thlabel{rk:origin}
	Note $M_{\rb}^*$ will always include the origin. 
		Thus, it is no problem if $\bF(p,t)$ becomes trapped in the neck, since this means $p^*=0 \in M_{\rb}^*$. 

\end{rk}

\begin{lem} \thlabel{lem:bulb}
	Neither bulb collapses to the origin. 
	That is, there is $p \in \M$ so $p^* \in M_{\rb}^*$ and $p^* \neq 0$. 
	The same is true for $M_{\lb}^*$
\end{lem}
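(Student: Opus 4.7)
The plan is to argue by contradiction. Suppose every $p \in \M$ with $\bF(p, t_{\=}) \in M_{\rb}(t_{\=})$ satisfies $p^* = 0$. By the type-I displacement estimate of \thref{lem:TI}, there is a uniform constant $C > 0$ so that $|\bF(p, t_{\=})| = |\bF(p, t_{\=}) - p^*| \le C\l^{-1}(t_{\=})$ for every such $p$; equivalently, the rescaled right bulb $\l(t_{\=})\, M_{\rb}(t_{\=})$ is contained in the closed ball $\bar{B}_C(0)$. The rest of the plan is to produce a point of $\l(t_{\=})\, M_{\rb}(t_{\=})$ of norm strictly greater than $C$, contradicting this inclusion.

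The point is manufactured from the cylindrical blow-up. By \thref{lem:cyl}, $\tM(s) \to \tM_{\infty}$ (the unit cylinder along the $\xi_2$-axis) in $C^2$ on compact subsets of $\R^3$. Taking $\e$ in \thref{lem:neck} small enough, we may assume $s_{\=}$ is large enough that $\tM(s_{\=})$ is a $C^2$-graph of norm less than $\tfrac{1}{2}$ over $\tM_{\infty} \cap \bar{B}_R$, where $R := 2C + 10$. Choose the reference point $\xi_{\infty} := (0,\, h,\, 1) \in \tM_{\infty}$ with $h := \max(C+2,\, 5)$; the graph description then supplies a companion $\xi_0 \in \tM(s_{\=})$ within distance $\tfrac{1}{2}$ of $\xi_{\infty}$, so that the second coordinate of $\xi_0$ exceeds $4$ and $|\xi_0| > C$.

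To close the argument I must verify $\xi_0 \in \l(t_{\=})\, M_{\rb}(t_{\=})$. Since the second coordinate of $\xi_0$ exceeds $4$, $\xi_0 \nin \tK$, so by \thref{lem:comp} it lies in one of the two components of $\tM(s_{\=}) \wo (\tM(s_{\=}) \cap \tK)$. The graph description over $\tM_{\infty} \cap \{\xi_2 \ge 4\}$ furnishes a continuous path inside $\tM(s_{\=})$ from $\xi_0$ to a point on the right lid circle $\tM(s_{\=}) \cap \{\xi_2 = 4\}$ that stays in $\{\xi_2 \ge 4\}$ throughout, so \thref{defn:bulb} identifies the component as the right one. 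Unrescaling, $x_0 := \l^{-1}(t_{\=})\, \xi_0 \in M_{\rb}(t_{\=})$ with $|x_0| > C\l^{-1}(t_{\=})$, the sought contradiction. The symmetric construction with second coordinate less than $-4$ handles $M_{\lb}^*$.

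I expect the topological check in the third paragraph to be the main technical point: one has to rule out the possibility that the far-out point $\xi_0$ on the rescaled cylinder attaches to the left rather than the right bulb. The $C^2$-graph approximation from \thref{lem:cyl} handles this cleanly by producing the connecting path, so the argument reduces to arranging that $s_{\=}$ is large enough to see the cylinder out to radius $R > C$, which is a permissible choice through \thref{lem:neck}.
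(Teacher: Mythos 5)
Your proof is correct, and in outline it is the same as the paper's: both combine the type-I displacement bound of \thref{lem:TI} with the observation that the right bulb reaches past the right lid of $K(t_{\=})$. The real difference is that you carry the type-I constant carefully. The paper's argument takes $x = \bF(p,t_{\=}) \in M_{\rb}(t_{\=})$ with $x_2(x) > 4\l^{-1}(t_{\=})$ and writes $|p^*-x| \le \l^{-1}(t_{\=})$, concluding $|p^*| \ge 3\l^{-1}(t_{\=})>0$; but \thref{lem:TI} actually gives $|p^*-x| \le C\l^{-1}(t_{\=})$ with $C$ the type-I constant (the factor of $C$ is also silently lost in the integral computation inside the paper's proof of \thref{lem:TI}), so the paper's two-line conclusion only closes for $C<4$. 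Your version survives arbitrary $C$ because you push the reference point on the rescaled surface out to $\xi_2 = \max(C+2,5)$, at norm exceeding $C$, and then verify that this point belongs to the rescaled right bulb by an explicit path down to the lid. That step requires the graph approximation to hold on $\bar B_{2C+10}$ rather than just on $\tK$, which is supplied by the $C^2_c$-convergence in \thref{lem:cyl} at the cost of a later $s_{\=}$; and the path verification is a short topological check the paper avoids by starting from a point already known to lie on $M_{\rb}(t_{\=})$. A lighter fix in the paper's own style would be simply to enlarge the constant $4$ in the definition of $\tK$ to anything exceeding the type-I constant $C$.
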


\begin{proof}
	Let $p \in \M$ so that 
	\[
		x := \bF(p,t_{\=}) \in M_{\rb}(t_{\=}) .
	\]
	By definition of $M_{\rb}(t_{\=})$, $|x| \ge x_2(x)>4\l^{-1}(t_{\=})$. 
	By \thref{lem:TI}, $|p^*-x| \le \l^{-1}(t_{\=})$. 
	\\

	Therefore $p^* \ge 3 \l^{-1}(t_{\=}) > 0$. 
\end{proof}

	Now that we've established that $M_{\rb}^*$ and $M_{\lb}^*$ have some points left to  work with, we set out to make sure each limit bulb has at least one regular point. 
	We do so in the next subsection. 
	\\
%
%
%
%
%



\subsection{Bulbs Do Not Collapse} \label{sec:bulb}
In this subsection, we show that neither bulb can have an entirely singular limit set. 
Thence we conclude the preimage of each bulb has a point $p$ for which $H\left( \bF(p,t) \right)$ stays bounded. 

	\begin{subl} \thlabel{subl:noloop}
		The limit set $M^*$ is simply connected.
	\end{subl}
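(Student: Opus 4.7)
The plan is to exploit the continuous surjection $\Psi\colon \M \to M^*$, $p \mapsto p^*$, whose continuity is \thref{lem:TI}. Since $\M$ is compact and $M^*$ Hausdorff, $\Psi$ is closed and hence a quotient map; and under assumption (v) with embedding preservation, $\M \cong M_0$ is a topological sphere with $\pi_1(\M)=0$. It therefore suffices to show that each fiber $\Psi^{-1}(x)$ is path-connected: iterated Van Kampen (collapsing a path-connected closed subset in a simply connected space keeps the quotient simply connected, applied once per singular fiber) will then yield $\pi_1(M^*)=0$.

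\paragraph{Fibers.} At a regular $x \in M^*$, smoothness of the convergence $M(t) \to M^*$ near $x$ immediately forces $\Psi^{-1}(x)$ to be a singleton. The interesting case is a singular $x$, which by \thref{cor:sing} must be cylindrical; translate $x=0$ without loss of generality. For any $p \in \Psi^{-1}(0)$, \thref{lem:TI} gives $|\tbF(p,s)| = \l(t)|\bF(p,t)| \le C$ uniformly in $s$. Combined with the $C^2_c$ convergence $\tM(s) \to \tM_\infty$ of \thref{lem:cyl}, the rescaled trajectory lies in a fixed compact truncation of the unit cylinder for all large $s$; unrescaling, $\bF(p,t)$ lies in a truncated cylindrical neck region of $M(t)$ of diameter $O(\l^{-1}(t))$ for all $t$ near $T$.

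\paragraph{Connecting points in a singular fiber, and main obstacle.} Given $p_1, p_2 \in \Psi^{-1}(0)$ and $t$ near $T$, both $\bF(p_i,t)$ lie in the same neck region, so can be joined there by a short arc $\beta_t$ of length $O(\l^{-1}(t))$; pulling back, $\alpha_t := \bF(\cdot,t)^{-1} \circ \beta_t\colon[0,1]\to\M$ is a path from $p_1$ to $p_2$. The $C^2_c$ rescaled convergence gives equicontinuity of $\{\alpha_t\}$ in a fixed background metric on $\M$, so Arzela-Ascoli produces a subsequential uniform limit $\alpha\colon[0,1]\to\M$ with $\alpha(0)=p_1$ and $\alpha(1)=p_2$. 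For each $r\in[0,1]$, $|\bF(\alpha_{t_n}(r),t_n)| = O(\l^{-1}(t_n))\to 0$, so \thref{lem:TI} forces $\Psi(\alpha_{t_n}(r))\to 0$; continuity of $\Psi$ then gives $\Psi(\alpha(r))=0$, i.e., $\alpha\subset\Psi^{-1}(0)$, as needed. The main obstacle is precisely this extraction of a genuine limit path $\alpha$ rather than an approximate connection: one must verify that $\bF(\cdot,t)^{-1}$ enjoys enough uniform regularity on the shrinking neck to furnish the equicontinuity, which itself rests on the $C^2_c$ rescaled convergence of \thref{lem:cyl} together with the type-I bound of \thref{lem:TI}.
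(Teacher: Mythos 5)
Your approach is genuinely different from the paper's. The paper never examines fibers of $p\mapsto p^*$: it observes that $\W_0\setminus\W^*$ is foliated by the embedded, mutually disjoint spheres $M(t)$, uses this foliation to define a continuous retraction $x\mapsto x^*$ of $\W_0\setminus\W^*$ onto $M^*$, and concludes $\pi_1(M^*)=0$ because $\pi_1$ of a retract injects into $\pi_1$ of the ambient (simply connected, by the foliation) region. Your plan instead treats $\Psi\colon\M\to M^*$ as a quotient map with connected point-inverses. Your fiber-connectivity analysis via \thref{lem:TI} and \thref{lem:cyl} is plausible and is the kind of thing that can be made rigorous: the equicontinuity issue you flag is real, but $\|D\bF(\cdot,t)^{-1}\|\sim\lambda(t)$ on the neck precisely offsets the $O(\lambda^{-1}(t))$ length of the connecting arc, so the Arzel\`a--Ascoli extraction goes through with care.

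The genuine gap is the final step. The principle ``surjection from a simply connected compactum with connected fibers yields a simply connected image'' is not a theorem in that generality, and the ``iterated Van Kampen'' you invoke (collapse one closed path-connected fiber at a time) only makes sense when the family of non-singleton fibers is finite. Here it need not be: the singular set at time $T$ is, by the result from \cite{cms} used in \thref{lem:coll}, a Lipschitz curve, hence potentially uncountable, and each singular $x$ contributes a non-trivial fiber. The correct general tools would be Smale's Vietoris mapping theorem for homotopy groups, which requires $M^*$ to be an ANR (not established), or, since $\M\cong S^2$, R. L. Moore's decomposition theorem, which requires the decomposition elements not to separate $S^2$ --- and that hypothesis actually \emph{fails} for a singular fiber, whose preimage in $\M$ contains a separating circle coming from the collapsing neck. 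Without one of these hypotheses, or a replacement for them, the reduction from ``connected fibers'' to ``$\pi_1(M^*)=0$'' does not close. The paper's retraction argument sidesteps the entire issue by never passing through the decomposition space.
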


	\begin{proof}
		First, since $M(t)$ is an embedding of $\M$ for each $t \in [0,T)$, $M(t)$ remains simply connected. \\

		The set $\W_0 \wo \W^*$ is foliated by $M$ and each $M(t)$ is embedded.
			Therefore, for each $x \in \W_0 \wo \W^*$, we can find $(p,t) \in \M \times [0,T)$ so that $\bF(p,t) = x.$ Then define $x^* := p^*$. \\

				Since each $M(t)$ is embedded and the map $p \mapsto p^*$ is continuous (Lemma 2.6 of~\cite{stn}), the map $x \mapsto x^*$ defines a (continuous) retraction from $\W_0 \wo \W^*$ to $M^* \sub \W_0 \wo \W^*$.
		Therefore $M^*$ is simply connected. 
	\end{proof}

	Next we show that each bulb has a regular point. 
	Intuitively, if the whole bulb becomes singular, then its limit set is a curve. 
	Thus, we can choose a ``farthest'' point of $M_{\rb}^*$ (or $M_{\lb}^*$), which must be cylindrical, but that would be strange, since it would suggest there were even farther points on the other end of the neck. 
	That will lead to a contradiction. 

	\begin{lem} \thlabel{lem:coll}
		Neither $M_{\rb}^*$ nor $M_{\lb}^*$ is entirely singular. 
	\end{lem}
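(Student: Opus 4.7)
The plan is an extremal-point argument. Assume for contradiction that $M_{\rb}^*$ is entirely singular. The singular set is closed in $M^*$, and $\bar{M_{\rb}^*}$ is compact (it is the continuous image of $Cl(M_{\rb}(t_{\=}))$ under $p \mapsto p^*$), so $\bar{M_{\rb}^*}$ is also entirely singular. By \thref{lem:bulb} the supremum of $|x|$ on $\bar{M_{\rb}^*}$ is strictly positive; let $x_0$ be a point where it is attained, so $x_0 \ne 0$.

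By the standing hypotheses of this section, $x_0$ is a cylindrical singular point; by \thref{lem:cyl} the tangent cylinder at $x_0$ is unique, with some axis direction $v \in \R^3$. Applying the analogue of \thref{lem:neck} centered at $x_0$ rather than the origin yields, for $t$ close to $T$, a local neck in $M(t)$ about $x_0$: a $C^2$-small graph over a shrinking cylinder with axis $\{x_0 + sv\}$. Since $x_0 \ne 0$, for $t$ sufficiently close to $T$ this local neck is disjoint from the original neck $K(t)$, and being connected it lies in exactly one of $M_{\rb}(t)$, $M_{\lb}(t)$. Because $\bar{M_{\lb}^*}$ does not accumulate at $x_0 \ne 0$ (the two bulb-limits meet only near the origin), the local neck must sit inside $M_{\rb}(t)$.

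I next claim the axis segment $\{x_0 + sv : |s| < \e\}$ lies in $\bar{M_{\rb}^*}$ for some $\e > 0$. For each such $s$ and each $\hat t$ near $T$, choose $\hat p \in \M$ with $\bF(\hat p, \hat t)$ on the local cylinder over $x_0 + sv$. Since the local neck lies in $M_{\rb}(\hat t)$ and the $M_{\rb}/M_{\lb}$ decomposition is stable under the flow on $[t_{\=},T)$ (by embedding preservation and \thref{lem:comp}), we have $\bF(\hat p, t_{\=}) \in M_{\rb}(t_{\=})$ and hence $\hat p^* \in M_{\rb}^*$. Meanwhile \thref{lem:TI} gives $|\hat p^* - (x_0 + sv)| \le C \l^{-1}(\hat t) + \sqrt{2(T-\hat t)} \to 0$ as $\hat t \to T$, so $x_0 + sv \in \bar{M_{\rb}^*}$. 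Now maximality of $|x_0|$ forces $|x_0 + sv|^2 \le |x_0|^2$, i.e.\ $2s\la x_0, v\ra + s^2 \le 0$ for all $|s| < \e$; sending $s \to 0^{\pm}$ gives $\la x_0, v\ra = 0$, whence $s^2 \le 0$ for every $s \ne 0$, a contradiction. Thus $x_0$ is regular. Any preimage of $x_0$ in $\partial M_{\rb}(t_{\=}) \sub M_{\=}(t_{\=})$ would be absorbed by the pinching original neck and would force $x_0$ to be singular on the $x_2$-axis; consequently the preimage lies in the open bulb and $x_0 \in M_{\rb}^*$. The argument for $M_{\lb}^*$ is identical.

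The main obstacle is the third paragraph---verifying that the axis segment at $x_0$ is in $\bar{M_{\rb}^*}$ rather than merely in $M^*$. This hinges on two points that require some care: the bulb decomposition is preserved by the flow on $[t_{\=},T)$, and a new cylindrical neck forming at $x_0 \ne 0$ is topologically confined to one side of the original neck.
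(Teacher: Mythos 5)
Your overall strategy --- assume the bulb-limit is entirely singular, pick an extremal point, and contradict extremality using the cylindrical neck structure there --- is the same extremal-point strategy the paper takes. But both the \emph{extremal quantity} you choose and the crucial third paragraph have gaps.

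The central failure is in the claim that the axis segment $\{x_0+sv : |s|<\e\}$ lies in $\bar{M_{\rb}^*}$. Your construction picks, for fixed $s$ and times $\hat t\nearrow T$, a point $\hat p\in\M$ with $\bF(\hat p,\hat t)$ on the local cylinder over $x_0+sv$. But the physical (unrescaled) neck at $x_0$ at time $\hat t$ has axial extent on the order of $\l^{-1}(\hat t)=\sqrt{2(T-\hat t)}$, which tends to $0$. So for any \emph{fixed} $s\ne 0$, once $\hat t$ is close enough to $T$ there is no portion of the local neck over $x_0+sv$ at all, and your $\hat p$ cannot be chosen. The estimate from \thref{lem:TI} is then controlling the wrong point. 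What \thref{lem:bulb} (applied at $x_0$) actually gives you is nonzero limit points in the bulbs of the new neck --- not points on the axis segment --- and those limit points need not lie along $v$. They live at $O(1)$ distance from $x_0$ in directions that depend on the geometry of the bulbs, not on $v$.

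This is exactly why the paper does not use $|x|$ and does not try to control the axis segment. Instead it invokes the structure theorem from \cite{cms}: the singular set at time $T$ is locally a Lipschitz graph over a line. Under the hypothesis that $M_{\rb}^*$ is entirely singular, it follows (together with compactness and \thref{subl:noloop}, ensuring non-closedness) that $M_{\rb}^*$ is a simple arc with one endpoint at the origin (\thref{rk:origin}). The extremal quantity is then the \emph{intrinsic} distance from $0$ along that arc, and the far endpoint $x$ is where the contradiction is produced: $x$ is cylindrical, so \thref{lem:bulb} applied at $x$ gives nontrivial limit points in \emph{both} bulbs of $x$, i.e.\ the arc must continue past $x$, contradicting that $x$ is an endpoint. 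This choice of extremal quantity sidesteps your trouble: one does not need the continuation to be exactly along $v$, only that the arc continues, which the [cms] structure plus \thref{lem:bulb} supplies. Note also that even if you could place $\bar{M_{\rb}^*}$ near $x_0$ inside the singular set's Lipschitz graph, that graph is Lipschitz over the axis, not equal to it; the corrections $h(s)$ with $|h(s)|\lesssim |s|$ would contaminate the quadratic inequality $2s\langle x_0,v\rangle + s^2\le 0$ and your contradiction evaporates unless you separately show $h(s)=o(s)$. Using intrinsic distance along the curve makes all of these issues moot.

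So: your intuition is right, but you are missing the ingredient the paper leans on --- the Lipschitz $1$-rectifiability of the singular set from \cite{cms} --- and the Euclidean-distance extremal choice does not survive the Lipschitz corrections. Replacing $|x|$ by arc-length along $M_{\rb}^*$ (once you know it is a simple arc via \cite{cms} and \thref{subl:noloop}) repairs the argument and matches the paper's proof.
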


	\begin{proof}
		Suppose that $M_{\rb}^*$ is entirely singular. 
		From~\cite{cms}, we know that the set of singular points of the flow at time $T$ is locally representable by a Lipschitz map over a line. 
		We take from this that, since $M_{\rb}^*$ is entirely singular, $M_{\rb}^*$ is a simple curve. 
		By \thref{lem:bulb}, that curve is not a singleton. 
		By \thref{subl:noloop}, that curve is not closed. 
		\\

		Recall that by \thref{rk:origin}, $M_{\rb}^*$ must also contain the origin. 
		Thus, by \thref{subl:noloop}, and since $M_{\rb}^*$ is compact, the origin is one endpoint of the curve, and it has another endpoint that is not the origin or in $M_{\lb}^*$.
		That other endpoint, call it $x$, attains the maximum intrinsic distance from 0 in $M_{\rb}^*$. 
		\\

		By the opening supposition, $x$ is a singular point, which we have also assumed to be cylindrical. 
		This leads us to a contradiction, since by \thref{lem:bulb}, $x$ has its own left and right bulbs, so there are farther points than $x$ from 0 along the curve $M_{\rb}^*$. 
		Thus, $M_{\rb}^*$ is not entirely singular. 
		The same argument applies to $M_{\lb}^*$. 
	\end{proof}

	\begin{cor}
		\thlabel{cor:reg}
		There is at least one point $x \in M_{\rb}^*$ with a point $p \in \M$ such that $p^*=x$ and $H(\bF(p,t))$ stays bounded as $t \nearrow T$. 
		The same is true of $M^*_{\lb}$. 
	\end{cor}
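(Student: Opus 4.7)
The plan is to cash in \thref{lem:coll} directly: extract a regular limit point $x \in M_{\rb}^*$, lift it to a point $p \in \M$ via the definition of $M_{\rb}^*$, and then convert the failure of $x$ to be singular into pointwise boundedness of $|A|$ along the trajectory $t \mapsto \bF(p,t)$. No delicate analysis is required beyond unwinding the definitions in \S\ref{sec:def} together with \thref{lem:TI}.

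First, I would apply \thref{lem:coll} to obtain a regular (nonsingular) point $x \in M_{\rb}^*$. By \thref{defn:limbulb} together with \thref{lem:TI}, there exists $p \in \M$ with $\bF(p,t_{\=}) \in M_{\rb}(t_{\=})$ and $p^* = x$, so in particular $\bF(p,t) \to x$ as $t \nearrow T$.

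Next, I would argue by contradiction that $|A(\bF(p,t))|$ is bounded on some interval $[t_0,T)$. If it were not, there would be a sequence $t_i \nearrow T$ with $|A(\bF(p,t_i))| \to \infty$; setting $p_i := p$ produces a sequence $(p_i,t_i) \in \M \times [0,T)$ with $\bF(p_i,t_i) \to x$ and $|A(p_i,t_i)| \to \infty$. By the definition of singular point in \S\ref{sec:def}, this forces $x$ to be a singular point of the flow, contradicting our choice of $x$. The pointwise inequality $|H| \le \sqrt{N}\,|A|$ then transfers the bound to $H(\bF(p,t))$, and the identical argument with $\lb$ in place of $\rb$ handles $M_{\lb}^*$.

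The main obstacle was already dispatched in \thref{lem:coll}; the present corollary is essentially a translation of the statement ``each limit bulb contains a regular point'' into the statement ``each limit bulb is reached by a trajectory along which mean curvature stays bounded''. The only things to be careful about are making sure the chosen $p$ genuinely satisfies $p^* = x$ (immediate from \thref{defn:limbulb} and \thref{lem:TI}) and that the negation of ``$x$ singular'' is exactly the boundedness we want along the single trajectory $\bF(p,\cdot)$, which follows by taking the constant sequence $p_i = p$ in the definition.
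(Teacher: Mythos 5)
Your proof is correct and takes essentially the same route as the paper's: both invoke \thref{lem:coll} to extract a regular point in each limit bulb and \thref{lem:TI} (with \thref{defn:limbulb}) to identify a trajectory $\bF(p,\cdot)$ converging to it. Your contradiction argument via the definition of singular point (taking the constant sequence $p_i = p$) and the subsequent passage from $|A|$ to $|H|$ via $|H| \le \sqrt{N}\,|A|$ is a slightly more careful unwinding of the final step than the paper's brief appeal to ``local continuity of the flow,'' but the underlying idea is identical.
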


	\begin{proof}
		By \thref{lem:TI}, for every $p \in \M$, $p^* \in M^*$ exists, and $M^*$ is composed entirely of such points $p^*$. 
		By \thref{lem:coll}, there is a $p^*$ (with corresponding $p \in \M$) for each bulb of $M^*$ at which $H(p^*)$ is defined and finite. 
		By the local continuity of the flow, $H\left( F(p,t) \right)$ must stay bounded.

	\end{proof}

	\section{Continuity of Singular Time} \label{sec:mc}

Here we set out to prove our main result. \\

The case where $M_{n0} \sub \BW_0$ offers much more control over the behavior of $M_n$ in terms of the behavior of $\BM$.
Recall in this case, because of well-posedness, we only need to show that $T_n \le \BT$ for large $n$.
Once we ensure the conclusion of \thref{thm:mc} holds in that case, we use an argument like that of \thref{thm:T} to finish the proof of \thref{thm:mc}, addressing the limit of $T_n$ more generally. 
\\

We break the work into two propositions, corresponding to the nonsimply and simply connected cases. 
We dispense with the nonsimply connected case first, since it is much simpler. 

\begin{prop} \thlabel{prop:nonsimply}
	Let $\BM_0$ be a smoothly embedded, closed surface. 
	Let $M_{n0}$ be a sequence of smoothly embedded, closed surfaces such that $M_{n0} \to \BM_0$ and $M_{n0} \sub \BW_0$.
	Assume $\BM_0$ is not simply connected, $\BM$ is type-I, and that $ \BM$ has a cylindrical singularity at time $\BT$.

	Then there is an $n_0>0$ so that $T_n \le \BT$ whenever $n>n_0$.
\end{prop}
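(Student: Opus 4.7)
My plan is to argue by contradiction, using the Hopf link machinery of \S\ref{sec:tools} to trap a point of $M_n(\BT)$ at the origin, and then using a blow-up at that point to contradict the asymptotically cylindrical shape of $\BW(t)$ near the singularity.

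First I would reduce. By well-posedness (\thref{thm:wp}) one already has $\liminf_n T_n \ge \BT$, so I may assume along a subsequence, still indexed by $n$, that $T_n > \BT$. The hypothesis $M_{n0} \sub \BW_0$ together with mean-convexity of $\BM$ give, by the comparison principle, $M_n(t) \sub \BW(t)$ for every $t \in [0,\BT]$. Fix $t_0 \in [t_{\=}, \BT)$. Since $\BM_0$ is not simply connected and the cylindrical pinch of $\BM$ occurs at a handle of the handlebody $\BW_0$, I can pick a smooth simple closed curve $\bar\g \sub \BM(t_0)$ meeting $\DD(t_0)$ transversely in exactly one point. The $C^2$-graph convergence $M_n(t_0) \to \BM(t_0)$ then lifts $\bar\g$, for $n$ large, to a smooth simple closed curve $\g_n \sub M_n(t_0)$ with the same transverse intersection property.

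Next I would show that the Hopf link $(\g_n, \bd\DD(t_0))$ of \S\ref{sec:tools} is preserved for all $t \in [t_0, \BT)$. The circle $\bd\DD(t)$ sits at Euclidean distance $4\l^{-1}(t)$ from the $x_2$-axis, lying outside $\BW(t)$ by \thref{lem:neck} and \thref{rk:lid}, while $\g_n(t) \sub M_n(t) \sub \BW(t)$. Hence the two curves never meet, and the preservation argument of \S\ref{sec:tools} applies verbatim, so their linking number stays equal to $1$. Consequently $\g_n(t) \cap \DD(t) \ne \none$ for every such $t$; any intersection point has distance at most $4\sqrt{2}\,\l^{-1}(t) \to 0$ from the origin, and the contradictory assumption that $M_n$ is smooth up to time $\BT$ then forces $0 \in M_n(\BT)$ by continuity.

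The contradiction is geometric. Let $P$ be the tangent plane to $M_n(\BT)$ at the origin, and let $\tM_n(s) := \l(t)M_n(t)$ denote the rescaling of $M_n$ about the origin in analogy with the definition of $\tM$. Smoothness of $M_n$ at $\BT$ gives $\tM_n(s) \to P$ in $C^2_{\loc}$ on compact subsets of $\R^3$ as $s \to \infty$, since this is the standard blow-up of a smooth surface at a regular point. On the other hand, $M_n(t) \sub \BW(t)$ rescales to $\tM_n(s) \sub \tW(s)$, and \thref{lem:neck} combined with \thref{lem:cyl} yields $\overline{\tW(s)} \to \{\xi_1^2+\xi_3^2 \le 1\}$ on compact subsets of $\R^3$. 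Passing to the limit gives $P \sub \{\xi_1^2+\xi_3^2 \le 1\}$, so $P$ is a $2$-plane through the origin contained in a solid cylinder of radius $1$ about the $\xi_2$-axis. But every $2$-plane through the origin has a direction along which $\sqrt{\xi_1^2+\xi_3^2}$ is unbounded, so no such plane can fit inside a cylinder of finite radius. This contradiction forces $T_n \le \BT$ for all $n$ sufficiently large. The subtlest step above is the topological one: producing $\bar\g$ with transverse intersection exactly $1$ relies on the neck cross section being non-separating in $\BM(t_0)$, which is the substantive content of the non-simply-connected hypothesis at a handle-type cylindrical pinch; once this is in hand, the Hopf link preservation is routine and the blow-up converts local smoothness of $M_n$ at $0$ into a global shape contradiction with the cylinder.
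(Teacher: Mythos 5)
Your proof is correct and follows the same Hopf-link strategy as the paper's for the core of the argument: producing a loop $\g_n \sub M_n(t_0)$ that links $\bd \DD(t_0)$, flowing it forward, and observing that the link (and hence a point of $M_n(t)$ close to the origin) persists for all $t\in[t_0,\BT)$. Where you part ways with the paper is in the endgame. The paper argues directly: since $M_n(t)\cap \DD(t)$ contains a nontrivial closed curve in a disk of radius $4\l^{-1}(t)\to 0$, the curvature of $M_n$ there must blow up by time $\BT$. You instead argue by contradiction, parabolically rescaling $M_n$ about $(0,\BT)$: smoothness of $M_n$ at $\BT$ forces $\tM_n(s)$ to converge to the tangent plane $P$ of $M_n(\BT)$ at $0$, while $\tM_n(s)\sub \tW(s)$ and \thref{lem:neck} force $P\cap\tK$ into the solid unit cylinder, which no $2$-plane through the origin can do (any such $P$ meets $\{\xi_2=0\}$ in a line through $0$, which exits the unit cylinder while remaining inside $\tK$). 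Both routes are sound. Your version trades an extra rescaled object for a cleaner mechanism (plane cannot live in a cylinder), whereas the paper's is shorter but states the "small trapped curve forces curvature blowup" step without spelling it out. Two small points worth making explicit in your write-up: (1) the convergence $\overline{\tW(s)}\to\{\xi_1^2+\xi_3^2\le 1\}$ is only justified inside $\tK$, where \thref{lem:neck} gives graphical control, but as your argument only uses $P\cap\tK$ this suffices; (2) both your proof and the paper's hinge on the neck cross-section $\BM(t_0)\cap\DD(t_0)$ being \emph{non-separating} in $\BM(t_0)$, which is what makes a loop of algebraic/geometric intersection number one with $\DD(t_0)$ available. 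You flag this; the paper uses it silently, so you are no worse off on that point.
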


(For related illustrations, see Figures~\ref{fig:K} and~\ref{fig:nonsimply}.)

\begin{proof} 
	Assume, without loss of generality, that the cylindrical singularity in question is at the origin. 
	Assume the axis of the cylinder is the $x_2$-axis, and $t \in [t_{\=},\BT)$, so we can make use of $\DD(t)$, which we can do by \thref{lem:neck}. \\

	Recall that mean curvature flow is well-posed and $\BM(t)$ is compact for $t \in [0,\BT)$. 
		Then there is $n_0$ after which $M_n(t_{\=})$ is a graph over $\BM(t_{\=})$, so $M_n(t_{\=}) \cong \BM(t_{\=})$. 
	So assume $n>n_0$.
			\\

	Since $\BM(t)$ is embedded for each $t \in [0,T)$, $\BM(t)$ remains nonsimply connected. 
	Since $\BM(t_{\=})$ is not simply connected, neither is $M_n(t_{\=})$. 
	Choose a curve $\g_{n,t_{\=}} \sub M_n(t_{\=})$ that is not contractible to a point within $M_n(t_{\=})$ and passes through $\DD$ exactly once. 
	Thus $\g_{n,t_{\=}}$ forms a Hopf link with $\bd\DD(t_{\=})$. \\

	We would like to subject $\g_n(t) \sub M_n(t)$ to the flow of $M_n$, with initial condition $\g_{n,t_{\=}}$ at time $t_{\=}$. 
	So for $t \in (t_{\=},\BT)$, define 
\[
	\g_n(t) := \bF_n(\bF_n^{-1}(\g_n,t_{\=}),t) .
\]

		By \thref{lem:neck}, $\BM(t) \cap \DD(t)$ remains a closed curve.
		Furthermore, since the flow preserves the condition $M_n(t) \sub \BW(t)$, we have that $M_n(t) \cap \DD(t)$, is bounded away from $\bd \DD(t)$ for $t \in [t_{\=},\BT)$, so $\g_n(t) \cap \DD(t)$ is also bounded away from $\bd \DD(t)$ (not uniformly in $t$).
		Since the curve $\bd \DD(t)$ shrinks homothetically, and mean curvature flow is continuous, both $\bd \DD(t)$ and $\g_n(t)$ undergo homotopy. 
		Therefore, the Hopf link formed by $\g_n(t_{\=})$ and $\bd \DD(t_{\=})$ is preserved until time $\BT$.
		\\

	In the same vein, since $M_n(t) \sub \BW(t)$, each component of $M_n(t) \cap \DD(t)$ is a closed curve. 
	Also, because of the Hopf link, for every $t \in [t_{\=},\BT)$ there is at least one point in $\DD(t)$ at which $\g_n(t)$ intersects $\DD(t)$ transversely. 
		Thus, for each $t \in [t_{\=},\BT)$, at least one of those curves is not a singleton. 
			Since $\DD(t)$ is a disk with radius $\l^{-1} \xrightarrow[t \to \BT]{} 0$, the maximum curvature on $M_n(t) \cap \DD(t)$ blows up no later than time $\BT$. 
		\begin{figure}[h]
	\centering
	\includegraphics[scale=.2]{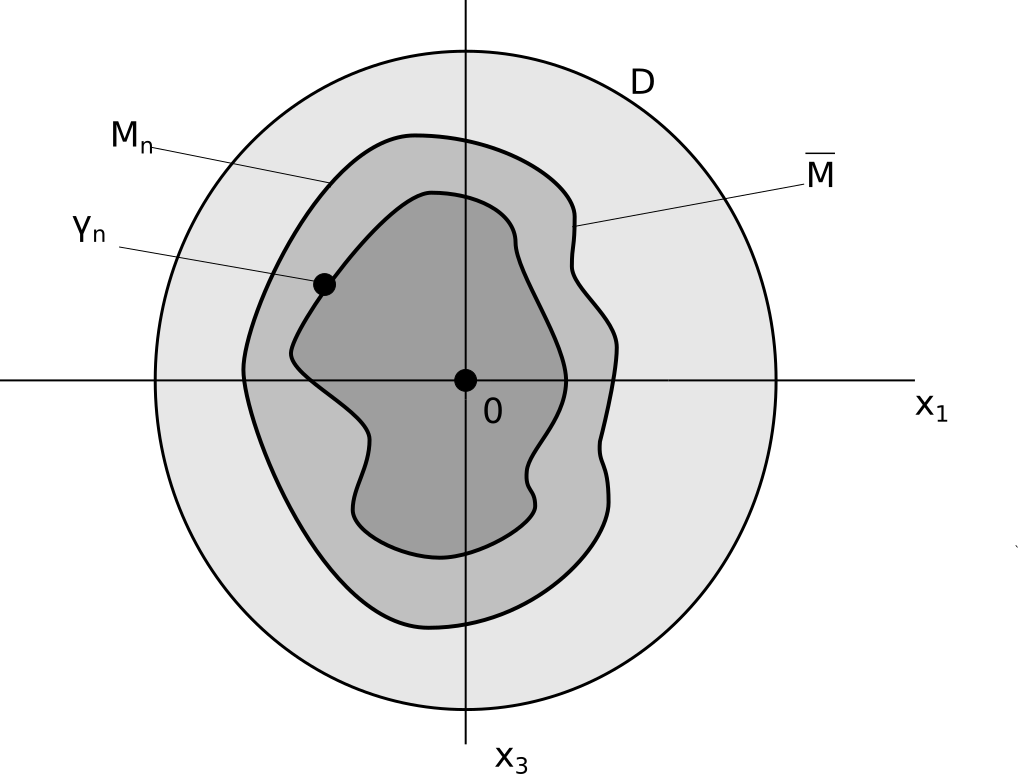}
		\caption{}
		\label{fig:nonsimply}
	\end{figure}
\end{proof}
 
Now for the case where $\BM$ is simply connected. 

\begin{prop} \thlabel{prop:contain}
	Let $\BM_0$ be a smoothly embedded, closed, mean-convex surface. 
	Let $M_{n0}$ be a sequence of smoothly embedded, closed surfaces such that $M_{n0} \to \BM_0$ and $M_{n0} \sub \BW_0$.
	Assume $\BM_0$ is simply connected, $\BM$ is type-I, and that $ \BM$ has a cylindrical singularity at time $\BT$.

	Then there is an $n_0>0$ so that $T_n \le \BT$ whenever $n>n_0$.
\end{prop}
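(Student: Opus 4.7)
The plan is to execute the neck-pinching strategy outlined in the introduction: inscribe two shrinking MCF balls inside $\BW$, one in each bulb, chosen so they survive past $\BT$. By well-posedness and comparison, these balls will also sit inside $\W_n(t)$ for large $n$. If $T_n$ were strictly greater than $\BT$, the smooth surface $M_n(\BT)$ would have to enclose both balls while $\W_n(\BT)$ fits inside $\BW^* := \bigcap_{t<\BT}\BW(t)$, and the pinching of $\BW$'s neck makes this topologically impossible.

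First, I invoke \thref{cor:reg} to produce $p_{\rb}, p_{\lb} \in \M$ with limits $p_{\rb}^* \in \BM_{\rb}^*$ and $p_{\lb}^* \in \BM_{\lb}^*$, and with $\BH\bigl(\BbF(p_{\rb},t)\bigr), \BH\bigl(\BbF(p_{\lb},t)\bigr) \le C_H$ uniformly on $[0,\BT)$. Since $\BM$ is mean-convex, it is $\alpha$-non-collapsed for some $\alpha>0$ (preserved by the flow), so at each $t$ a ball of radius $\alpha/\BH \ge \alpha/C_H$ sits inside $Cl(\BW(t))$, tangent to $\BM(t)$ at $\BbF(p_{\rb},t)$ and at $\BbF(p_{\lb},t)$ respectively. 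Set $r := \alpha/(2C_H)$ and pick $t_0 \in (t_{\=}, \BT)$ close enough to $\BT$ that $4(\BT - t_0) < r^2/2$; place balls $B_{\rb}(t_0) \subset \BW_{\rb}(t_0)$ and $B_{\lb}(t_0) \subset \BW_{\lb}(t_0)$ of radius $r$ at the corresponding tangent positions (shrinking $r$ slightly if needed so the inclusion is strict). By well-posedness, $B_{\rb}(t_0) \cup B_{\lb}(t_0) \subset \W_n(t_0)$ for all $n$ sufficiently large.

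Evolving each ball under MCF produces a concentric ball of radius $\sqrt{r^2 - 4(t-t_0)}$, which remains positive on $[t_0, t_0 + r^2/4)$, an interval strictly containing $\BT$ by construction. The comparison principle then yields $B_{\rb}(t), B_{\lb}(t) \subset \W_n(t)$ on $[t_0, \min\{T_n, t_0 + r^2/4\})$, while $B_{\rb}(t) \subset \BW_{\rb}(t)$ and $B_{\lb}(t) \subset \BW_{\lb}(t)$ on $[t_0, \BT)$.

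Suppose for contradiction that $T_n > \BT$. Then $M_n(\BT)$ is smooth, so $\W_n(\BT)$ is a connected open set containing $B_{\rb}(\BT) \cup B_{\lb}(\BT)$. Since $\BW(t)$ is decreasing in $t$ and $\W_n(t) \subset \BW(t)$ for $t<\BT$, a short continuity argument gives $\W_n(\BT) \subset Int(\BW^*)$. The main obstacle is showing that $B_{\rb}(\BT)$ and $B_{\lb}(\BT)$ lie in distinct connected components of $Int(\BW^*)$. To see this, any path in $\BW^*$ from one bulb to the other must, at each time $t \in [t_{\=}, \BT)$, cross the neck $\BW_{\=}(t) \subset K(t)$ by \thref{lem:comp}; by compactness the path meets $\bigcap_{t \in [t_{\=}, \BT)} K(t) = \{0\}$. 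But no open neighborhood of $0$ lies in $\BW(t)$ for $t$ close to $\BT$, since the neck has cross-sectional diameter $\lesssim \l^{-1}(t) \to 0$, and hence $0 \notin Int(\BW^*)$. The two bulb regions therefore lie in distinct components of $Int(\BW^*)$, contradicting the path-connectedness of $\W_n(\BT)$.
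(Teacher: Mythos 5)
Your proposal follows the same overall strategy as the paper---place a shrinking MCF sphere in each bulb, chosen to outlive the neck, and derive a contradiction from the assumption $T_n > \BT$---but it deviates in two places. First, you place the spheres inside $\BW(t_0)$ (using non-collapsing of $\BM$) and then transfer them to $\W_n(t_0)$ by well-posedness, whereas the paper applies non-collapsing directly to $M_n(t_0)$ at a point $y$ near $\BbF(p,t_0)$ where $H_n(y)\le 2C$. Second, your endgame is topological: you argue $\W_n(\BT)$ would be a connected open set contained in $Int(\BW^*)$, yet containing two balls that sit in distinct components of $Int(\BW^*)$. The paper instead extracts a nontrivial closed curve in $M_n(t)\cap\DD(t)$, whose curvature must blow up as $\DD(t)$ collapses. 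Both of these substitutions are reasonable in spirit, and your topological finish is arguably cleaner in that it avoids the curvature estimate on a planar cross-section.

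There is, however, a genuine gap in your choice of $r$. You set $r:=\alpha/(2C_H)$, whereas the paper takes $r=\min\{\alpha/(2C),\,\delta\}$ with $\delta=|x|/8$, and then also waits until $R(t_0)\le |x|/16$ and $|\BF(p,t_0)|\ge |x|/2$ before placing the sphere. These spacing conditions are precisely what guarantee the sphere is \emph{disjoint from $K(t_0)$} and hence lies entirely in one bulb. Your condition $4(\BT-t_0)<r^2/2$ only ensures the sphere outlives $\BT$; it does not bound $r$ relative to $|x|$. If $|x|$ is small compared to $\alpha/(2C_H)$, the tangent point $\BbF(p_{\rb},t_0)$ sits close to the neck and the ball of radius $r$ can overlap $K(t_0)$, or even have its center $c_{\rb}$ near or at the origin. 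In that case neither the intermediate claim $B_{\rb}(t)\subset\BW_{\rb}(t)$ nor the final assertion that $B_{\rb}(\BT)$ and $B_{\lb}(\BT)$ lie in distinct components of $Int(\BW^*)$ is justified: the path argument you give requires knowing that the centers $c_{\rb}$ and $c_{\lb}$ lie in opposite bulbs of $\BW(t)\setminus Cl(\BW_{\=}(t))$ for $t$ near $\BT$, and this fails if $c_{\rb}$ can fall inside $K(t)$. To repair this, impose $r\le |x|/8$ (for both bulbs' regular points) and add the conditions on $t_0$ that the paper uses; with those in place, your topological contradiction goes through.
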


The proof is a rather technical procedure, so we begin with some motivation. 
Recall the hope is to construct something like that in Figure~\ref{fig:ball}. 
For each sphere, we need to choose the radius $r$, the time $t_0$ at which to place the sphere, $n$ so that $M_n(t_0)$ is close to $\BM(t_0)$, and the point $y \in M_n(t_0)$ at which we place the sphere (see Figure \ref{fig:distance}) .
When placing the sphere in $M_n(t_0)$, we have three concerns. 
\begin{enumerate}[(i)]
	\item \label{it:fit}	\emph{The sphere must fit in $M_n(t_0)$.}
		Here, we turn to the regularity results of \S \ref{sec:bulb} to apply the Andrews condition.
		In addressing this, we prescribe a maximum radius for the sphere, thus fixing its lifespan. 
	\item \label{it:neck} \emph{The sphere should not intersect the neck.} We want that the sphere stays out of $K$, keeping some points of $M_n(t_0)$ away from the neck. 
		(The neck and sphere will contract away from each other, so this condition is preserved.)
	\item \label{it:live} \emph{The sphere must outlive the neck.} Given the fixed lifespans of $\BM$ and the sphere (once $r$ is chosen), we need only wait to place the sphere until it will live past $\BT$.
\end{enumerate}

The preceding conditions are mostly about $M_n(t_0)$, but we only have control over $M_n(t_0)$ via $\BM(t_0)$ by well-posedness. 
This makes dependencies more delicate. 
Therefore the proof is broken into three parts: choosing $r$, then $t_0$, then $n$ and $y$. 
\\

In the first part, we choose the radius $r$ small enough to facilitate (\ref{it:neck}) and (\ref{it:fit}). 
In the second part, we choose $t_0$ close enough to $\BT$ that (\ref{it:live}) is satisfied and $R(t_0)$ satisfies (\ref{it:neck}) (intuitively, we are waiting for $\BM(t)$ to develop a neck very small compared to the bulbs).
In the third part, we choose $n$ large enough that $M_n(t_0)$ approximates both bulbs, so a sphere of radius $r$ can be placed in each of $\W_{n\rb}(t_0)$ and $\W_{n\lb}(t_0)$.
\\

\begin{proof}[Proof of \thref{prop:contain}]

	Assume, without loss of generality, that the cylindrical singularity in question is at the origin. 
	Assume the axis of the cylinder is the $x_2$-axis, so we can make easy use of $K(t)$. \\

	Assume $t \ge t_{\=}$, so $\BM$ has a neck.
For simplicity, we'll do the proof just in terms of the right bulb. 
\\

\begin{sloppypar}
	By \thref{cor:reg}, there are $x \in M_{\rb}^*$, $p \in \M$, and $C>0$ so that ${\BF(p,t) \xrightarrow[t \to \BT]{} x \neq 0}$ and $\BH(\BF(p,t))<C$ for $t \in [t_{\=},\BT)$. 
		Several choices of constants and objects in the proof rely on careful spacing with respect to $x$ and the origin. 
		To that end, the quantity $\d=\frac{|x|}{8}$ is convenient. 
		(See Figure \ref{fig:outof} for a preview)
	\end{sloppypar}

	\paragraph{Part I: Choosing $r$} 
	Since $\BM_0(t)$ becomes strictly mean convex immediately, there is some $\a>0$ so that $\BM(t_{\=})$ is $2\a$-non-collapsed (so $M_n(t_0)$ will be $\alpha$-non-collapsed when we choose it). 
	Take
		\[
			r_1 = \frac{\a}{2C}
		\]
	as an upper bound for $r$.
	Now choose $r = \min\{r_1,\d\}$. 
	\\

	Later, $r \le \d$ will help us with (\ref{it:neck}) (see again Figure \ref{fig:outof}), 
	and $r \le r_1$ will allow us to use well-posedness to help with (\ref{it:fit}) (we cannot choose \emph{where} to place the sphere until after we have chosen $t_0$, $n$, and $y$). 
		\\

	\paragraph{Part II: Choosing $t_0$}
	Since $|\BF(p,t)| \xrightarrow[t \to \infty]{} x$, there must be $t_1 \in [t_{\=},\BT)$ after which $|\BF(p,t)|\ge \frac{|x|}{2}$, by continuity of the flow. 
		Let $R(t)=4 \l^{-1}(t)$, the radius of $K(t)$.
		Then $R(t)$ shrinks to 0 by time $\BT$. 
		Thus there is a time $t_2 \in [t_1,\BT)$ after which $R(t) \le \frac{|x|}{16}$ (See Figure \ref{fig:outof}). 
			These two conditions will help with (\ref{it:neck}) in part III. 
			\\

	\begin{figure}[h]
		\centering
		\includegraphics[scale=.4]{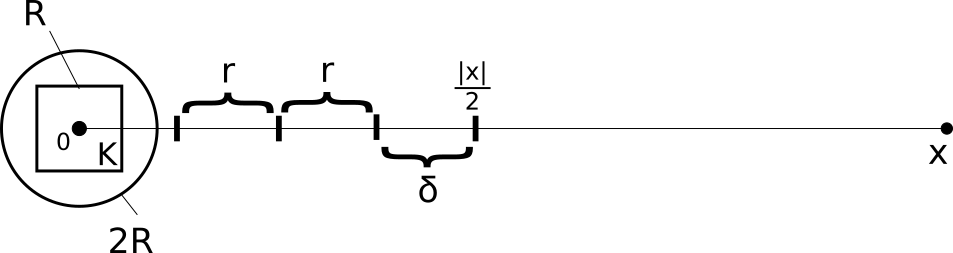}
		\caption{``Worst case scenario'', with distances aligned on the $x_2$-axis}
		\label{fig:outof}
\end{figure}

	Since the radius $r$ of the sphere is already fixed, we know its lifespan. 
	Call it $\t$. 
	We can find the time $t_3 \in [t_2,\BT)$ at which $\t=2(\BT-t_3)$. 
	This means if the sphere begins its flow at any time in $[t_3,\BT)$, the sphere will survive past time $\BT$.
	\\

	Choose $t_0=t_3$, so $t_0$ has the properties of $t_1,t_2,t_3$.
	Then, because of the choice of $t_3$, we have already satisfied (\ref{it:live}).

	\paragraph{Part III: Choosing $n$ and $y$}

	Let $x_0=\BF(p,t_0)$, so $|x_0| \ge \frac{|x|}{2}$.
	\\

By well-posedness, there is $n_1$ such that if $n\ge n_1$, $M_n(t_0)$ is $\a$-non-collapsed. 
	Given $\d>0$ above, and recalling $\BH(x_0)=\BH(\BF(p,t_0))<C$, there exists $n_2\ge n_1$ so large that, if $n \ge n_2$, then there is a point $y \in M_n(t_0)$, within $\d$ of $x_0$, so that $H_n(y) \le 2C$. 
	Let $n_0=n_2$. 
	Now assume $n \ge n_0$ so that $M_n(t_0)$ is $\a$-non-collapsed and that such a $y$ exists. 
	Choose that $y$. 
	\\

	We now have the following:
		\[
			\begin{array}{c}
				|y| \ge |x_0|-\d \ge \frac{|x|}{2} - \d = 3\d \\
				R(t_0) \le \frac{\d}{2} \\
				\mbox{sphere diameter} = 2r \le 2 \d .
			\end{array}
		\]

		Those together imply that, were a sphere of radius $r$ placed at time $t_0$ touching $y$, the distance between sphere and $K(t_0)$ is at least $\frac{\d}{2}$.
		Since the sphere would contract under mean curvature flow, and $K(t)$ contracts by definition, they would stay disjoint. 
		Thus if we flow the sphere by mean curvature flow, as we do with $M_n(t)$, we are done with (\ref{it:neck}).
		(See Figures~\ref{fig:distance}~and~\ref{fig:outof})
		\\
		
		\begin{figure}[h]
	\centering
	\includegraphics[scale=.4]{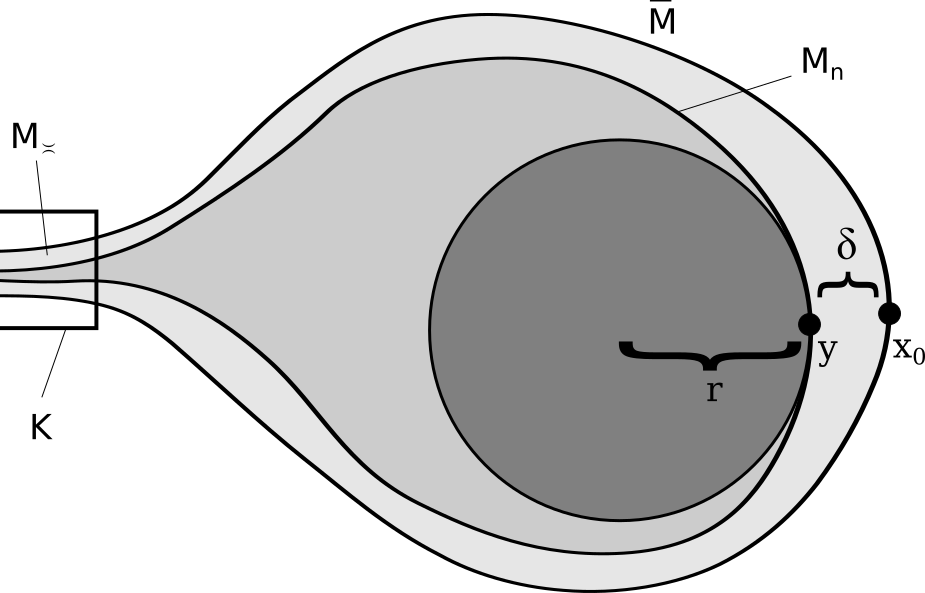}
	\caption{Sphere fits in bulb far from neck}
		\label{fig:distance}
	\end{figure}

Recall $M_n(t)$ is $\a$-non-collapsed. 
Then, since $H_n(y) \le 2C < \infty$, and $r \le r_1 = \frac{\a}{2C}$, there is room to place a sphere of radius $r$ inside $Cl(\W_n(t_0))$, tangent at $y$. 
Since the sphere is disjoint from $K(t_0)$, it is contained in $Cl(\W_{n\rb}(t_0))$. 
That takes care of (\ref{it:fit}). \\

Thus, for our choice of $r$, $t_0$, $n$, and $y$, (i)-(iii) are all satisfied, with regards to the right bulb.

\paragraph{Finishing the proof}
		Repeat the above argument for $\BM_{\lb}(t)$. 
		\\

		Since there is a sphere in each bulb of $\W_n(t)$, up to time $\BT$, there are points of $M_n(t)$ in each bulb of $\BW_n(t)$ up to time $\BT$. 
		Therefore, if $T_n > \BT$, then $M_n(t)$ has a nontangential intersection with $\DD(t)$ for $t \in [t_0,\BT)$. 
			As in the proof of \thref{prop:nonsimply}, since $\BM(t) \cap \DD(t)$ is a closed curve, and $M_n(t) \sub \BW(t)$, at least one component of $M_n(t) \cap \DD(t)$ is a nontrivial closed curve for all $t \in [t_0,\BT)$. 
			Since $\DD(t)$ collapses to a point at time $\BT$, the maximum curvature on $M_n(t) \cap \DD(t)$ must blow up. 

\end{proof}

Propositions \ref{prop:nonsimply} and \ref{prop:contain} completely cover the case where $M_{n0} \sub \BW_0$. 
So we mimic the proof of \thref{thm:T} to reduce the proof of \thref{thm:mc} to that case. 

\begin{proof}[Proof of \thref{thm:mc}]
	Let $\BM$ and $M_n$ be as in \thref{thm:mc}.
	By well-posedness, we already have that $\dsp \liminf_{n \to \infty} T_n \ge \BT$. 
	So we need only show that $\dsp \limsup_{n \to \infty} T_n \le \BT$. 
	\\

	Let $0<\varepsilon$. 
	Define $\hM_n(t)=M_n(t+\e)$.
	We need $\hT_n > \e$ for the following proof to make sense (since the intervals of existence times for $\BM$ and $\hM_n$ need to overlap).
	However, the goal at the end of the proof is to show that, for large $n$, $\hT_n < \BT+\e$, so if $\hT_n \not>\e$, we are done. 
	So we can just assume $\hT_n > \e$. 
	\\

	Now $\hM_{n0} = M_n(\varepsilon)$. 
	Since $\BM(t)$ strictly is mean-convex for $t=[\frac{\varepsilon}{2},\BT)$, its velocity at every point is inward with positive speed. 
Thus $\hM_0(\e) \sub \BW_0$, and we have the Hausdorff distance $d=d_H \lp \BM(\e),\BM_0 \rp > 0$. 
By well-posedness, there is an $n_0>0$ so that if $n \ge n_0$, then $d_H(\BM(\e),\hM_{n0})<\frac{d}{2}$ (see \thref{lem:fth}).
So assume $n \ge n_0$.
Rearranging
	\[ 
		d_H(\BM(\e),\BM_0) \le  d_H(\BM(\e),\hM_{n0}) + d_H(\hM_{n0},\BM_0) 
	\]
	gets us
	\[
		d_H(\hM_{n0},\BM_0) 
		\ge d_H(\BM(\e),\BM_0) - d_H(\BM(\e),\hM_{n0}) 
		> d-\frac{d}{2} 
		=\frac{d}{2} 
		> 0 .
	\]
Thus $\hM_{n0} \sub \BW_0$. \\

We turn to \thref{cor:sing} to see that $\BM$ must shrink to a point at time $\BT$ or have a cylindrical point at time $\BT$. 
In the former case, apply \thref{thm:T}.
In the latter case, we need to apply \thref{prop:nonsimply} or \thref{prop:contain} accordingly.
However, \thref{prop:contain} requires connectedness. 
Consider the component of $\BW_0$ that contains the singular point, then restrict all attention to its boundary. 
It is sufficient to apply the two propositions to that component (and the corresponding component of $M_{0n}$). \\

Then we see that $T_n = \hT_n + \e  \le \BT + \e$.
Since that is true for any $n \ge n_0$, we have $\dsp \limsup_{n \to \infty} T_n \le \BT + \e$.
Since $\e$ was arbitrary, we are done. 
Then either \thref{prop:nonsimply} or \thref{prop:contain} applies. 

\end{proof}

\section{Continuity of the Limit Set} \label{sec:profile}

Recall the Hausdorff distance 
\[
		d_H(X,Y) = \max \left\{ \sup_{x \in X} \inf_{y \in Y} |x-y|, \sup_{y \in Y} \inf_{x \in X} |y-x| \right\} ,
\]
and that if $\hS$ is a graph of $f$ over $\Sigma$, then by \thref{lem:fth}
\[
	d_H(\Sigma,\hS) \le \|f\|_{C^)} .
\]

\begin{rk}
	Although the case where $\BM$ contracts to a point can be made very simple with an argument similar to that in the proof of \thref{thm:T}, the proof below suffices for both spherical and cylindrical cases. 
\end{rk}

\paragraph{Proof of \thref{cor:profile}}

For intuition, note from parabolic regularity (under the assumption that $T_n > \BT$), it makes sense that 
	\[
		M_n(\BT) \sim M_n(t) \sim \BM(t) \sim \BM(\BT) 
	\]
for a fixed $t$ close to, but less than, $\BT$.
	We need \thref{thm:T} to assert that $M_n^* = M_n(T_n)$ is anything like $M_n(\BT)$. 
\begin{proof}
	Let $\e>0$, and set $C =\sqrt{2N}$. 
	Let $d_H$ denote Hausdorff distance. 
	\thref{lem:fth} says we can make $d_H\left( M_n(t_0),\BM(t_0) \right)$ small by making $n$ large. \\
	
	\noindent Choose: 
	\begin{itemize}
		\item $t_0 \in [0,\BT)$ so that $|\BT - t_0| < \e$. \\
			\item 		$n_1$ so $n \ge n_1$ implies $|\BT-T_n|<\frac{\e}{2}$ (which exists by \thref{thm:T} or \ref{thm:mc}). \\
				\indent (Keeps $T_n$ close to $\BT$, which also means $T_n>t_0$.) \\
			\item $n_2$ so $n \ge n_2$ implies $d_H(M_n(t_0),\BM(t_0)) < \e$ (which exists by well-posedness). \\
		\end{itemize}

	\noindent
	By \thref{lem:S}, every point of $\BM(t_0)$ is within $C\sqrt{\BT-t_0}$ of $\BM^*$. 
	Therefore, we have
	\[
		d_H \lp \BM(t_0), \BMs \rp \le C \sqrt{\BT-t_0} \le C \sqrt{\e}
	\]
	and
	\begin{align*}
		d_H(M^*_n,M_n(t_0)) 
		& \le C \sqrt{T_n-t_0} 
		= C \sqrt{(T_n-\BT) +(\BT-t_0)} \\
		& \le C \lp \sqrt{|T_n-\BT|} + \sqrt{|\BT-t_0|} \rp 
		< 2 C \sqrt{\e} .
	\end{align*}

	Now assume $n \ge \max \{n_1,n_2\}$, and apply 
		\begin{align*} &d_H(M^*_n, \BMs)  \\
			\le & d_H(M^*_n,M_n(t_0))   +   d_H(M_n(t_0),\BM(t_0))   +   d_H(\BM(t_0),\BMs) \\
			\le & 2 C \sqrt{\e} + \e + C \sqrt{\e} . 
		\end{align*}
Since $\e$ was arbitrary, we are done. 
\end{proof}

	\bibliographystyle{plain}
\bibliography{bib}

\begin{thebibliography}{10}

\bibitem{and}
B~Andrews.
\newblock \textit{Non-Collapsing in Mean-Convex Mean Curvature Flow}.
\newblock {\em Geom. Topol.}, 2012.

\bibitem{ang}
Sigurd~B. Angenent.
\newblock Shrinking doughnuts.
\newblock In N.~G. Lloyd, W.~M. Ni, L.~A. Peletier, and J.~Serrin, editors,
  {\em Nonlinear Diffusion Equations and Their Equilibrium States, 3:
  Proceedings from a Conference held August 20--29, 1989 in Gregynog, Wales},
  pages 21--38. Birkh{\"a}user Boston, Boston, MA, 1992.

\bibitem{cm}
T~Colding and W~Minicozzi.
\newblock \textit{Generic Mean Curvature Flow I; Generic Singularities}.
\newblock {\em Ann. of Math.}, 2012.

\bibitem{cmu}
T~Colding and W~Minicozzi.
\newblock \textit{Uniqeness of Blowups and \L ojasiewicz Inequalities}.
\newblock {\em Ann. of Math.}, 2015.

\bibitem{cms}
T~Colding and W~Minicozzi.
\newblock \textit{The Singular Set of Mean Curvature Flow with Generic
  Singularities}.
\newblock {\em Invent. Math.}, 2016.

\bibitem{huisph}
G~Huisken.
\newblock \textit{Flow by Mean Curvature of Convex Surfaces into Spheres}.
\newblock {\em J. Differential Geom.}, 1984.

\bibitem{hui}
G.~Huisken.
\newblock \textit{Asymptotic Behavior For Singularities of the Mean Curvature
  Flow}.
\newblock {\em J. Differential Geom.}, 1990.

\bibitem{mant}
C.~Mantegazza.
\newblock {\em \textit{Lecture Notes on Mean Curvature Flow}}.
\newblock 2011.

\bibitem{sw}
W~Sheng and X~Wang.
\newblock \textit{Singularity Profile in the Mean Curvature Flow}.
\newblock {\em Methods Appl. Anal.}, 2009.

\bibitem{stn}
A~Stone.
\newblock \textit{A Density Function and the Structure of Singularities of the
  Mean Curvature Flow}.
\newblock {\em Calc. Var.}, 1994.

\bibitem{whimc}
B~White.
\newblock \textit{The Nature of Singularities in Mean Curvature Flow of
  Mean-Convex Sets}.
\newblock {\em J. Amer. Math. Soc.}, 2002.

\end{thebibliography}

\end{document}